\documentclass[12pt]{amsart}
\usepackage{placeins}
\usepackage{mathtools}
\usepackage{hyperref}
\usepackage{cleveref}
\usepackage{subcaption}
\usepackage{tikz}
\usepackage{float}
\usetikzlibrary{calc} 
\usepackage{amsthm, amsmath}

    \newtheorem{theorem}{Theorem}[section]
    \newtheorem{lemma}[theorem]{Lemma}
    \newtheorem{corollary}[theorem]{Corollary}

    \newtheorem{example}[theorem]{Example}
    
    \newtheorem{remark}{Remark}

    \newtheorem{comment}{Comment}
    \newtheorem{definition}[theorem]{Definition}

\usepackage{amsmath}
\usepackage{amssymb}
\usepackage[top=1in, left=1in, right=1in, bottom=1in]{geometry}
\hypersetup{colorlinks,allcolors=blue}
\usepackage{etoolbox}
\usepackage{tikz}
\usepackage{float}

\usepackage[utf8]{inputenc}
\usepackage{graphicx}
\usepackage{gensymb}
\usepackage{xcolor}
\usepackage[T1]{fontenc}
\usepackage{array}
\usepackage[export]{adjustbox}
\usepackage{comment}
\usepackage[dvipsnames]{xcolor}

\newcommand{\ar}[1]{\overset{\rightarrow}{#1}}
\newcommand{\AR}[1]{\overrightarrow{#1}}

\usepackage{tikz}
\usepackage{subcaption}
\usetikzlibrary{external,quotes,chains}
\tikzset{vertex/.style={black,fill,draw,minimum size=6pt,inner sep=0pt,circle,thin},bold/.style={black,line width=0.6mm},plain/.style={black,thin},bold edges/.style=bold,plain edges/.style=plain,label distance=1mm,text node/.style={rectangle,fill=none,draw=none},every label/.style=text node,caption node/.style={text node,font=\Large}}

\title{Ornaments and Difference Distance Magic Oriented Graphs}
\author[R.~Aceska]{Roza Aceska*} 
\address{Department of Mathematical Sciences, Ball State University, Muncie, IN, USA}
\email{raceska@bsu.edu }
\author[M.~Lort]{McKailyn Lort}
\address{Department of Mathematical Sciences, Ball State University, Muncie, IN, USA}
\email{mckailyn.lort@bsu.edu}

\author[A.~Ripperger]{Allison Ripperger}
\address{Department of Mathematical Sciences, Ball State University, Muncie, IN, USA}
\email{allison.ripperger@bsu.edu}
\date{}

\keywords{Graph labeling, magic graph, oriented graphs   }
\begin{document}

\maketitle
\begin{abstract}
One way to construct difference distance magic oriented graphs  is via weighted sums, a technique initially introduced in \cite{AAetc}. We explore the quality of said construction further by introducing the notion of an  ornament. An ornament is an oriented graph that, when  used in a weighted sum with  an existing difference distance magic oriented graph (DDMOG),  creates a new DDMOG. We provide  results on the construction of a specific type of ornaments, called $s-$nodes ornaments.  
        We conclude the paper with a list of open questions related to DDMOGs and ornaments.
\end{abstract}

\section{Introduction}
Graph labeling problems, which involve assigning integers to vertices or edges to satisfy specific sum-based properties, have been studied extensively since Sedláček posed a question on magic labeling in  \cite{Sed}; notable variations of labelings include super edge-magic labelings \cite{Enomoto} and harmonious labelings \cite{Graham}.  
For a comprehensive survey of results on various graph labelings we refer  to the dynamic survey by Gallian \cite{Gal}.  
In \cite{FM}, difference distance magic (DDM) labeling was introduced on oriented graphs, with initial examples of graphs with DDM labeling, called difference distance magic oriented graphs (DDMOGs). At every vertex of a DDMOG, the sum of the labels of its in-neighbors (the incoming flow) exactly matches the sum of the labels of its out-neighbors (the outgoing flow). This property can be seen as a direct discrete analog to Kirchhoff’s Current Law, which states that the total current entering a junction (node) in an electrical circuit exactly equals the total current leaving that junction. Thus, the DDM labeling ensures that the ``potential'' provided by the vertex labels is perfectly distributed across the oriented edges.

In this paper we explore  the construction of new oriented graphs with DDM labeling from smaller, well-selected oriented graphs. We offer a systemized method to {\it{attach}} oriented cycles to an existing DDMOG, thereby creating a new DDMOG with a larger number of vertices. 

 \subsection{Definitions and known results}
In this paper, all graphs $G=(V(G), E(G))$ are finite, undirected, and simple, where $n=|V(G)|$ and $m=|E(G)|$.  
 The \emph{neighborhood} of a vertex is the set of its neighbors, and given a vertex $v \in V(G)$ is denoted $N(v)$.  The \emph{degree} of a vertex $v$ is $|N(v)|$. A \emph{digraph} $D$ is a graph with directed edges: it is defined to be a pair $(V,E)$, consisting of a set of \emph{vertices} $V(D)$ and a collection of ordered pairs of vertices $E(D)$, called \emph{edges}. If $u, v \in V(D)$ and $(u,v) \in E(D)$, then we say $u$ and $v$ are \emph{adjacent} (or are \emph{neighbors}) and that the edge $(u,v)$ is \emph{incident} to $u$ and $v$, and is illustrated in diagrams by $u\bullet$ \hspace{-0.1in} $\longrightarrow$ \hspace{-0.1in} $\bullet v$.  The \emph{in-neighborhood} of a vertex $v$ is the set of vertices $u \in V(D)$ such that $(u,v) \in E(D)$ and is denoted $N^+(v)$.  The \emph{out-neighborhood} of a vertex $v$ is the set of vertices $u \in V(D)$ such that $(v,u) \in E(D)$ and is denoted $N^-(v)$. When necessary to avoid confusion, we use $N_G^+(v)$ and $N_G^-(v)$ to denote the in- and out-neighborhoods of $v$, respectively, in a specified graph $G$. 
An \emph{oriented graph} $\AR{G}$ is a digraph such that $(u,v) \in E(\ar{G})$ implies $(v,u) \not \in E(\ar{G})$. When working with oriented graphs, we use both $(u,v)$ and $uv$ for the oriented edge $u\rightarrow v$. The \emph{underlying graph} $G$ of a digraph $D$ is the graph such that  
        1. $V(G)=V(D)$,  and  2. if $(u,v) \in E(D)$ or $(v,u) \in E(D)$, then $uv \in E(G)$.
               If the underlying graph of a digraph $D$ is $G$, then we say that $D$ is a \emph{direction} of $G$.  Likewise, if the underlying graph of an oriented graph $\AR{G}$ is $G$, then we say that $\AR{G}$ is an \emph{orientation} of $G$.  
  A {\it cycle} $C_L$ (or $L-$cycle) is a graph whose $L$ vertices are $\it v_1, v_2, \ldots, v_L$, and its edges are $\it v_1v_2$, $v_2v_3$, $\ldots$, $v_{L-1}v_L$ and $\it v_Lv_1$. 
    In this paper we only work with oriented cycles, such that at every vertex there is one incoming and one outgoing edge; in Subfigure~\ref{fig:b}  the two copies of $\ar{C_3}$ with labels $6, 7, 8$ and $15, 16, 17$ have the {\it same orientation}, while the two copies of  two copies of $\ar{C_3}$ with labels $6, 7, 8$ and $9, 10, 11$  have {\it opposite orientations}. 

\begin{figure}
\centering

\begin{minipage}[b]{0.45\textwidth}  \centering
\vspace{-5mm}

  \begin{subfigure}[t]{\textwidth}
  \centering
  \begin{tikzpicture}[scale=0.76,node distance=1.4cm, base node/.style={circle,draw,minimum size=21pt}]

\node[base node, very thick] (5) at (1, 6.5) {5}; 
\node[base node, very thick] (1) at (-1, 5) {1};
\node[base node, very thick] (2) at (3, 5) {2};
\node[base node, very thick] (3) at (-1,8) {3};
\node[base node, very thick] (4) at (3,8) {4};

\path[->, very thick] (5) edge (1);
\path[->, very thick] (2) edge (5);
\path[->, very thick] (3) edge  (5); %
\path[->, very thick] (5) edge  (4); %

\path[->, very thick] (1) edge[bend right=27]   (2);
\path[->, very thick] (1) edge [bend left=39]  (3);
\path[->, very thick] (4) edge[bend left=27]  (2);
\path[->, very thick] (4) edge[bend right=27]   (3);

\end{tikzpicture}
  \caption{ DDM Labeling of $\overrightarrow{W_4}$}
  \label{fig:a}
  \end{subfigure}
	
  \vspace{0.5cm} 
	
  \begin{subfigure}[b][.21\textheight][t]{\textwidth}  \centering
  \begin{tikzpicture}[scale=0.76,node distance=1.4cm, base node/.style={circle,draw,minimum size=21pt}]

 \node[base node, very thick,  label={[text=teal] left:+1}] (6) at (-2, 2.5) {6};
\node[base node, very thick, label={[text=orange] below:-2}] (7) at (0, 2.5) {7};
\node[base node, very thick, label={[text=teal] left:+1}] (8) at (-2, .5) {8};
\node[base node, very thick, label={[text=teal] left:-1}] (9) at (3, 2.5) {9};
 \node[base node, very thick, label={[text=orange] below:+2}] (10) at (5, 2.5) {10};
\node[base node, very thick, label={[text=teal] left:-1}] (11) at (3, .5) {11};
 \path[->, very thick] (6) edge (7);
\path[->, very thick] (7) edge(8); 
  \path[->, very thick] (8) edge (6);
\path[->, very thick] (9) edge (11);
\path[->, very thick] (11) edge  (10); %
\path[->, very thick] (10) edge  (9); %

 \node[base node, very thick, label={[text=teal] left:-1}] (12) at (-2, -1) {12};
\node[base node, very thick, label={[text=orange] below:+2}] (13) at (0, -1) {13};
\node[base node, very thick, label={[text=teal] left:-1}] (14) at (-2, -3) {14};
\node[base node, very thick, label={[text=teal] left:+1}] (15) at (3, -1) {15};
 \node[base node, very thick, label={[text=orange] below:-2}] (16) at (5, -1) {16};
\node[base node, very thick, label={[text=teal] left:+1}] (17) at (3, -3) {17};
 \path[->, very thick] (13) edge (12);
\path[->, very thick] (14) edge(13); 
  \path[->, very thick] (12) edge (14);
\path[->, very thick] (15) edge (16);
\path[->, very thick] (16) edge  (17); %
\path[->, very thick] (17) edge  (15); %

\end{tikzpicture}
  \caption{Labeling of  $4\overrightarrow{C_3}$ (weights \textcolor{teal}{$\pm1$}, \textcolor{orange}{$\pm 2$})}
  \label{fig:b}
  \end{subfigure}
  \vspace{13mm}
\end{minipage}
\hfill

\begin{minipage}[b]{0.45\textwidth}
\begin{center}
  \begin{subfigure}[t]{\textwidth}
  \centering
  \begin{tikzpicture}[scale=0.76,node distance=1.4cm, base node/.style={circle,draw,minimum size=21pt}]
 \node[base node, very thick] (6) at (-2, .5) {6};
\node[base node, very thick] (7) at (0, .5) {7};
\node[base node, very thick] (8) at (-2, -1.5) {8};
\node[base node, very thick] (9) at (3, .5) {9};
 \node[base node, very thick] (10) at (5, .5) {10};
\node[base node, very thick] (11) at (3, -1.5) {11};
 \path[->, very thick] (6) edge (7);
\path[->, very thick] (7) edge(8); 
  \path[->, very thick] (8) edge (6);
\path[->, very thick] (9) edge (11);
\path[->, very thick] (11) edge  (10); %
\path[->, very thick] (10) edge  (9); %

 \node[base node, very thick] (12) at (-2, -3) {12};
\node[base node, very thick] (13) at (0, -3) {13};
\node[base node, very thick] (14) at (-2, -5) {14};
\node[base node, very thick] (15) at (3, -3) {15};
 \node[base node, very thick] (16) at (5, -3) {16};
\node[base node, very thick] (17) at (3, -5) {17};
 \path[->, very thick] (13) edge (12);
\path[->, very thick] (14) edge(13); 
  \path[->, very thick] (12) edge (14);
\path[->, very thick] (15) edge (16);
\path[->, very thick] (16) edge  (17); %
\path[->, very thick] (17) edge  (15); %

\node[base node, very thick] (5) at (1, 4.5) {5}; 
\node[base node, very thick] (1) at (-1, 3) {1};
\node[base node, very thick] (2) at (3, 3) {2};
\node[base node, very thick] (3) at (-1,6) {3};
\node[base node, very thick] (4) at (3,6) {4};

\path[->, very thick] (5) edge (1);
\path[->, very thick] (2) edge (5);
\path[->, very thick] (3) edge  (5); %
\path[->, very thick] (5) edge  (4); %

\path[->, very thick] (1) edge[bend right=27]   (2);
\path[->, very thick] (1) edge [bend left=39]  (3);
\path[->, very thick] (4) edge[bend left=27]  (2);
\path[->, very thick] (4) edge[bend right=27]   (3);
\path[->, very thick, orange] (10) edge (2);
\path[->, very thick, orange] (13) edge (2);
\path[->, very thick, orange] (2) edge (7);
\path[->, very thick, orange] (2) edge[bend left=52]  (16);
\path[->, very thick, teal] (6) edge (1);
\path[->, very thick,  teal] (1) edge (9);
\path[->, very thick,  teal] (1) edge[bend right=37]  (12); %
\path[->, very thick,  teal] (15) edge[bend right=3]  
(1); %

\path[->, very thick,  teal] (8) edge[bend left=58]  (1);
\path[->, very thick,  teal] (1) edge (11);
\path[->, very thick,  teal] (1) edge[bend right=39]  (14); %
\path[->, very thick,  teal] (17) edge[bend right=10]  (1); %

\end{tikzpicture}
  \caption{	DDM Labeling of $\overrightarrow{W_4} \oplus 4   \overrightarrow{C_3}$.}
  \label{fig:c}
  \end{subfigure}\end{center}
		\vspace{-1mm}
\end{minipage}

\caption{The construction of  DDMOG $\overrightarrow{W_4} \oplus 4   \overrightarrow{C_3}$ and its DDM labeling, following the proof of Theorem~\ref{thms} (when $s=2$).} \label{fig:exampleTheorem}
\end{figure}

For a graph $G$, a {\it labeling function} is a bijective function $f$ that maps $V(G)$ to some set of integers, called {\it a set of labels}. Let $x\in V(\ar{G})$. The {\it weight} of vertex $x$ is defined \cite{FM} as the sum of the vertex labels directed into $x$ minus the sum of vertex labels directed out of $x$, 
\begin{equation*}
    \text{wt}(x) = \sum_{y\in N^+(x)}f(y)-\sum_{y\in N^-(x)}f(y).
\end{equation*}
 If the labeling function we are working with is not clear, we use $\text{wt}_f(v)$ to denote the weight of the vertex $v$ with labeling $f$.  

A \emph{difference distance magic} (DDM) labeling \cite{FM} of an oriented graph $\ar{G}$ with $n$ vertices is a bijection $f: V(\ar{G})\rightarrow \{1,2,...,n\}$ where there exists some $k\in \mathbb{N}$ such that for all $x\in V(\ar{G})$ $\text{wt}_f(x)=k$; it shows   that $k=0$ for all $x\in V(\ar{G})$.  
     %
     If an oriented graph $\AR{G}$ has a DDM labeling then we say that $\AR{G}$ is a \textit{difference distance magic oriented graph (DDMOG)}. If there exists  an orientation  on a graph $G$ that  $\ar{G}$ is a DDMOG, then we refer to  $G$ as  \textit{difference distance magic orientable (DDMO)}.   It is trivial to see that   any digraph where every edge is bidirectional is trivially a DDMDG. Another trivial example of a DDMOG is the zero edges graph $n\ar{K_1}$ for any $n\geq 1$. 
   It shows \cite{FM} that if an oriented graph is a path or a cycle, then it is not a DDMOG. 
   The {\it smallest} nontrivial example of a graph with DDM labeling is the wheel $\vec{W_4}$ (Figure~\ref{fig:a}); here we refer to the smallest number of vertices. 
   It is known \cite{AAetc} that there exists a $n-$vertices oriented graph with DDM labeling for any $n\geq 5$. 
A new type of a graph sum was introduced in \cite{AAetc}, formed by taking two oriented graphs   and adding additional edges based on the labeling-induced weights at each vertex:
\begin{definition}\label{def:whtsumN}
       Let $k\in \mathbb Z^+$ and let $\ar{G}$ be an oriented graph with an associated labeling function
      $g: V(\ar{G}) \rightarrow \{1,2,\dots, n\}$,
      where $g(v_i)=i$ for any  $v_i \in V(\ar{G})$. If $\ar{H}$ is an oriented graph with labeling $h: V(H) \rightarrow \mathbb{Z}^+$, where  
       $\{ |\text{wt}_h(u)| : u \in V(\ar{H}) \} \subseteq \{0\} \cup \{ g(v)+ks : v \in V(\ar{G})\}$ for some $k \in \mathbb Z$,  then the weighted sum of $\ar{G}$ and $\ar{H}$ shifted by $k$ denoted $\ar{G} \oplus_{\text{wt}_h}^k \ar{H}$
      is the oriented graph with vertex set $V(\AR{G} \oplus_{\text{wt}_h}^k \AR{H})=V(\AR{G}) \cup V(\AR{H})$ and $E(\AR{G} \oplus_{\text{wt}_h}^k \AR{H})=E(\AR{G}) \cup E(\AR{H}) \cup (\bigcup\limits_{i=1}^n (E^i \cup E^{-i}))$, where $E^i=\{v_i u:u \in V_h^{-i-k}(\AR{H})\}$ and $E^{-i}=\{uv_i:u \in V_h^{i+k}(\AR{H})\}$. 
 When $k=0$, we  denote the weighted sum by $\AR{G} \oplus_{\text{wt}_h} \AR{H}$ instead of $\AR{G} \oplus_{\text{wt}_h}^0 \AR{H}$; when the labeling and the induced weight are clear, we denote the weighted sum by $\AR{G} \oplus \AR{H}$. 
\end{definition}
  Our research is inspired by a special case of the following theorem:
\begin{theorem}\cite{AAetc} \label{thmWgt1}  Let $\overrightarrow{G}$ be a DDMOG on $n$ vertices with DDM labeling $g$ and let $\overrightarrow{H}$ be an oriented graph on $m$ vertices with a bijective labeling $h: V(\AR{H}) \rightarrow \{n+1,\dots, n+m\}$ such that   for every $0 \leq i \leq \displaystyle \max_{v \in V(\AR{H})} \{|\text{wt}_h(v)|\} \leq n$, it holds
\begin{equation}\label{condthm1}
    \sum_{v \in V_h^i(\AR{H})} h(v)=\sum_{v \in V_h^{-i}(\AR{H})} h(v).
\end{equation}   Then, $\AR{G} \oplus_{\text{wt}_h} \AR{H}$ is a DDMOG on $n+m$ vertices. \end{theorem}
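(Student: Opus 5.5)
The plan is to build the labeling of $\AR{G} \oplus_{\text{wt}_h} \AR{H}$ directly, setting $f=g$ on $V(\AR{G})$ and $f=h$ on $V(\AR{H})$. Since $g$ maps onto $\{1,\dots,n\}$ and $h$ onto $\{n+1,\dots,n+m\}$, $f$ is a bijection onto $\{1,\dots,n+m\}$. What remains is to show $\text{wt}_f(x)=0$ for every vertex $x$. Throughout I use the notation $V_h^{j}(\AR{H})=\{u\in V(\AR{H}):\text{wt}_h(u)=j\}$. By the hypothesis $\max|\text{wt}_h|\le n$, every vertex of $\AR{H}$ with nonzero weight has $|\text{wt}_h(u)|=i$ for some label $i=g(v_i)$. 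Hence the added edge sets $E^{i}, E^{-i}$ (with $k=0$) connect each such $u$ to exactly one vertex of $\AR{G}$, namely $v_{|\text{wt}_h(u)|}$. Vertices of weight $0$ receive no new edges.

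\textbf{Vertices of $\AR{H}$.} Take $u\in V(\AR{H})$ with $\text{wt}_h(u)=j\neq 0$. Suppose first $j=-i<0$. Then $u\in V_h^{-i}$, so the definition adds the edge $v_iu$, which is an incoming edge at $u$ carrying label $g(v_i)=i$. The new weight is $\text{wt}_f(u)=\text{wt}_h(u)+i=-i+i=0$. Now suppose $j=i>0$. Then the edge $uv_i$ is added, which is outgoing at $u$, and $\text{wt}_f(u)=i-i=0$. If $\text{wt}_h(u)=0$, nothing changes and the weight stays $0$. I will need to check that the orientation convention in $E^{\pm i}$ matches these signs, i.e.\ that $v_iu$ with $u\in V_h^{-i}$ indeed raises the weight of $u$. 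This is the one place where sign bookkeeping could go wrong, so I will compare carefully against the definition of $\text{wt}$.

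\textbf{Vertices of $\AR{G}$.} Take $v_i$. Its weight in $\AR{G}$ is $0$ because $g$ is DDM. The new edges at $v_i$ are of two kinds. There are outgoing edges $v_iu$ for $u\in V_h^{-i}(\AR{H})$, which contribute $-\sum_{u\in V_h^{-i}}h(u)$. There are incoming edges $uv_i$ for $u\in V_h^{i}(\AR{H})$, which contribute $+\sum_{u\in V_h^{i}}h(u)$. Therefore
\begin{equation*}
\text{wt}_f(v_i)=0+\sum_{u\in V_h^{i}(\AR{H})}h(u)-\sum_{u\in V_h^{-i}(\AR{H})}h(u)=0
\end{equation*}
by condition \eqref{condthm1}. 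For $i$ larger than $\max|\text{wt}_h|$, both sets are empty and the weight is trivially $0$.

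Finally, I must confirm that the result is an oriented graph with no multi-edges or 2-cycles. The new edges join $V(\AR{G})$ to $V(\AR{H})$, which are disjoint vertex sets. Each $u$ is adjacent to exactly one $v_i$, so there is no doubled or antiparallel edge. The main step is the weight computation at the $v_i$, and this is exactly where hypothesis \eqref{condthm1} is needed.
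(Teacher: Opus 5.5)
Your proof is correct. The sign conventions in $E^{i}$ and $E^{-i}$ match as you claim: an edge $v_iu$ with $u\in V_h^{-i}(\AR{H})$ is incoming at $u$ and adds $+i$. The weight at each $v_i$ then vanishes exactly by \eqref{condthm1}. The paper itself only cites this theorem from \cite{AAetc} without giving a proof. Your argument is the same direct verification the paper carries out in every case of the proof of Theorem~\ref{thms}: $f=f_0$ on $\AR{G_0}$ and $f=h$ on $\AR{H}$, new edges cancel each $\text{wt}_h(v)$, and the node weights vanish because the label sums balance.
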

The authors of \cite{AAetc} offered only one generalized example of a new DDMOG constructed via a weighted sum;  as a corollary of their Theorem~\ref{thmWgt1}, they offered the following result: 
\begin{corollary}\label{corKC4}
Given  $k \in \mathbb{Z}^+$, and given any DDMOG $\overrightarrow{G_0}$ with  $\vert V(G_0)\vert = n\geq 2k$,   there exists a labeling  $h$ on    $k\overrightarrow{C_4}$ such that $\overrightarrow{G_0} \oplus_{{wt} _h} k\overrightarrow{C_4}$ is a DDMOG. \end{corollary}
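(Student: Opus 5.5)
The plan is to apply Theorem~\ref{thmWgt1} directly, with $\overrightarrow{G}=\overrightarrow{G_0}$ and $\overrightarrow{H}=k\overrightarrow{C_4}$. This requires three things: a bijective labeling $h:V(k\overrightarrow{C_4})\to\{n+1,\dots,n+4k\}$, a bound $\max|\text{wt}_h|\le n$, and the balance condition \eqref{condthm1} for every relevant $i$. I will try to make each copy of $\overrightarrow{C_4}$ balanced on its own, so that \eqref{condthm1} holds cycle by cycle and therefore also for the disjoint union.

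First I compute the weights on one oriented $4$-cycle. If its vertices in cyclic order are $a\to b\to c\to d\to a$, each vertex has one in-neighbour and one out-neighbour. Hence
\[
\text{wt}(b)=h(a)-h(c),\quad \text{wt}(c)=h(b)-h(d),\quad \text{wt}(d)=h(c)-h(a),\quad \text{wt}(a)=h(d)-h(b).
\]
So opposite vertices always carry opposite weights. Now take the $j$-th copy ($1\le j\le k$) and label it with four consecutive integers in cyclic order: $h(a)=n+4j-3$, $h(b)=n+4j-2$, $h(c)=n+4j-1$, $h(d)=n+4j$. Then $\text{wt}(a)=\text{wt}(d)=2$ and $\text{wt}(b)=\text{wt}(c)=-2$. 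The vertices of weight $2$ have label sum $2n+8j-3$, and so do the vertices of weight $-2$.

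Summing over all $j$ gives
\[
\sum_{v\in V_h^{2}}h(v)=\sum_{v\in V_h^{-2}}h(v)=\sum_{j=1}^k(2n+8j-3).
\]
For every $i\notin\{2,-2\}$ both sides of \eqref{condthm1} are empty sums, hence $0$. The labeling is a bijection onto $\{n+1,\dots,n+4k\}$ by construction. Moreover, $\max|\text{wt}_h|=2\le n$, since $n\ge 2k\ge 2$.

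With these checks, Theorem~\ref{thmWgt1} gives that $\overrightarrow{G_0}\oplus_{\text{wt}_h}k\overrightarrow{C_4}$ is a DDMOG. One should also confirm that the weighted sum is a genuine oriented graph. Here the added edges are $v_2u$ for $u$ of weight $-2$ and $uv_2$ for $u$ of weight $2$, and $v_2$ exists because $n\ge 2$. Each vertex of $\overrightarrow{H}$ gets exactly one new edge, so no edge is repeated and no pair of opposite edges is created.

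I expect no serious obstacle; the only delicate point is choosing the label order so that each cycle balances by itself. This choice uses the hypothesis $n\ge 2k$ only through $n\ge 2$. An alternative that spreads the weights over $\pm1,\dots,\pm k$, and so really needs $n\ge 2k$, would also work, but the uniform $\pm2$ construction is simpler.
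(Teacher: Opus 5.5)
Your proof is correct and follows the route the paper indicates: the corollary is a direct application of Theorem~\ref{thmWgt1} to a labeling of $k\overrightarrow{C_4}$ in which each directed $4$-cycle balances its own weight classes, which makes $k\overrightarrow{C_4}$ a $1$-node ornament. Your consecutive labeling puts every weight at $\pm 2$ (this is the consecutive pattern that, as the paper remarks after Theorem~\ref{thms}, gives only the weight class $\pm 2$ when $L=4$), so the argument only needs $n\ge 2$ and proves the statement under a weaker hypothesis than $n\ge 2k$.
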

In this paper we explore generalizations of Corollary~\ref{corKC4}. Given any $L\geq 3$, we show that four copies of an oriented $L-$cycle can be incorporated into a new DDMOG (Section~\ref{sectionthms}). 
We specify the choices  in the orientation and the labeling of the $L-$cycles that deliver new  DDMOGs, which   are significantly different from the types of DDMOGs   initially presented in \cite{AAetc}.
%
%
\subsection{New notions and initial observations}

    Let $\ar{G}$ and $ \ar{H}$ be two separate graphs. We say that $ \ar{H}$ is {\it attached} to $ \ar{G}$ when we introduce one or more oriented edge(s) between some vertex (or vertices) in $V(G)$ and some  vertex (or vertices) in $V(H)$. 
    Clearly, there are many different ways to attach one graph to another (and produce a new graph with the process); we explore those ways that produce a new graph with DDM labeling. 
    Any vertex $u$ of graph $\ar{G}$ that has multiple  vertices of $\ar{H}$ attached to it, is called a {\it node}.  
    
     An {\it ornament} $\overrightarrow{H}$ is an oriented graph that can be attached to a DDMOG $\overrightarrow{G}$ in such a way that the resulting graph has DDM labeling, while preserving the initial  DDM labeling of $\overrightarrow{G}$. %
A generalized  example of an ornament is an oriented graph $\overrightarrow{H}$ such that Theorem \ref{thmWgt1} holds for any DDMOG $\ar{G}$ with $n$ vertices, for a suitably selected value of $n$.
If $s$ vertices in a DDMOG ${\ar{G}}$ are to be attached to multiple vertices in the ornament ${\ar{H}}$, then $\ar{H}$ is called a {\it $s-$nodes ornament}.   
An example of a $2-$nodes ornament formed by four copies of $\ar{C_3}$ is presented in Subfigure~\ref{fig:b}; the new DDMOG  (Subfigure~\ref{fig:c}) is created via a weighted sum with an oriented wheel (Subfigure~\ref{fig:a}). The theorems presented in Section~\ref{sectionthms} are offering constructions of $s-$nodes ornaments, for
various values of $s$. 
In this paper we only study ornaments formed by cycles -  we call such ornaments {\it cycle ornaments} or c-ornaments if needed to distinguish them from ornaments formed in other ways. An example of a graph that is an ornament but not a c-ornament is the windmill of two  wheels $\ar{W_4}^{(2)}$  introduced in \cite{AAetc}. While $\ar{W_4}^{(2)}$ is a DDMOG itself,  shifting the DDM labeling by $1$,   turns  $\ar{W_4}^{(2)}$ into a $1-$node ornament; then, $\ar{K_1}\oplus_{wt} \ar{W_4}^{(2)}$ is a DDMOG (see Figure 11 in \cite{AAetc}). 

\begin{remark}\label{rem1}
 Consider an oriented $L-$cycle $\ar{H}$ with vertices $v_1, v_2, \hdots, v_L$ and edges $v_1v_2$, $v_2v_3$, $\hdots$, $v_{L-1}v_L$, and $ v_Lv_{L+1}\equiv v_Lv_1$. 
  Given any bijective labeling   $h: V(\ar{H}) \rightarrow \{n+1, n+2, \hdots, n+ L\}$ for any $n\in \mathbb{Z}_0^+$, at each vertex   $v_i$, we have
  \begin{equation}\label{ineqateachv}    
        \text{wt}_h(v_i)=h(v_{i-1})-h(v_{i+1})\neq 0. \end{equation}   This is the reason why no $L-$cycle can have DDM labeling. However the total sum of all the weights at the vertices, which we label by $\text{wt}_h(\ar{H})$, is always zero:
   \begin{equation*}     \text{wt}_h(\ar{H}) =\sum_i \text{wt}_h(v_i)= h(v_1) - h( v_2) + 
   \hdots + h(v_{n-1}) - h(v_n) + h(v_n) - h(v_1) = 0.     \end{equation*}
    \end{remark}
     In this paper we only study c-ornaments of type $4\ar{C_L}$, but we note that the following question, a generalization based on Remark~\ref{rem1}, is not yet answered: {\it  If $\ar{H}$ is an oriented graph paired with a labeling function $h$ such that         $\text{wt}_h(\ar{H})=0$, does there exists an integer $k>0$ such that $k$ copies of $\ar{H}$  form an ornament?}  The $1-$node ornament $\ar{W_4}^{(2)}$ we mentioned earlier satisfies this condition, which is an indication that the answer to this question is positive under some constraints.

  \subsection{First examples of $1-$node and  $2-$nodes ornaments}
  \label{squareO} 
  One  way to label  an oriented $4-$cycle which will result in an ornament was utilized in \cite{AAetc} to generate a $1-$node ornament $\ar{C_4}$ (in general, $k\ar{C_4}$, $k\geq 1$).  
 We point  here to a way to label $\ar{H}=4\ar{C_4}$ that  will result in a $2-$nodes c-ornament.  
This labeling,  as displayed in Figure~\ref{fig:4C4},  creates a new DDMOG by attaching $\ar{H}$ to two nodes in an already established DDMOG (see Figure \ref{fig:4partsOrn}). We note that by Theorem~\ref{thmWgt1}, it is sufficient to show that equality~\eqref{condthm1} is satisfied for $i \in \{2, 6\}$;
these equalities are easily verified.  We will show (Theorem~\ref{thms}) that  $4\ar{C_L}$ is a $2-$nodes ornament for any $L\ge3$, but the labeling scheme for every $L\neq 4$ is different than the one used in Figure \ref{fig:4partsOrn}. 

\begin{figure}
\centering
\begin{tikzpicture}
[scale=.85, node distance=1.4cm, base node/.style={circle,draw,minimum size=25pt}]

\node[base node, very thick,  label={[text=red] below:+2}] (7) at (-12, 4) {$v_1$};
\node[base node, very thick, label={[text=red] above: -2}] (13) at (-10, 6) {$v_7$};

\node[base node, very thick, label={[text=red] below:-6}] (9) at (-10, 4) {$v_3$};
\node[base node, very thick,  label={[text=red] above:+6}] (11) at (-12, 6) {$v_5$};
\path[->, very thick] (7) edge (9);
\path[->, very thick] (9) edge (13);
\path[->, very thick] (13) edge (11);
\path[->, very thick] (11) edge (7);

\node[base node, very thick,  label={[text=red] below:-6}] (10) at (-6, 4) {$v_4$};
\node[base node, very thick, label={[text=red] above:+6}] (12) at (-8, 6) {$v_6$};

\node[base node, very thick,  label={[text=red] below:+2}] (8) at (-8, 4) {$v_2$};
\node[base node, very thick, label={[text=red] above:-2}] (14) at (-6, 6) {$v_8$};
\path[->, very thick] (8) edge (10);
\path[->, very thick] (10) edge (14);
\path[->, very thick] (14) edge (12);
\path[->, very thick] (12) edge (8);

\node[base node, very thick,  label={[text=red] below:-2}] (15) at (-4, 4) {$v_9$};
\node[base node, very thick, label={[text=red] above:+2}] (21) at (-2, 6) {$v_{15}$};

\node[base node, very thick,  label={[text=red] below:+6}] (17) at (-2, 4) { $v_{11}$};
\node[base node, very thick, label={[text=red] above:-6}] (19) at (-4, 6) { $v_{13}$};

\path[->, very thick] (17) edge (15);
\path[->, very thick] (15) edge (19);
\path[->, very thick] (19) edge (21);
\path[->, very thick] (21) edge (17);

\node[base node, very thick,  label={[text=red] below:-2}] (16) at (0, 4) { $v_{10}$};
\node[base node, very thick,  label={[text=red] below:+6}] (18) at (2, 4) { $v_{12}$};

\node[base node, very thick, label={[text=red] above:+2}] (22) at (2, 6) { $v_{16}$};
\node[base node, very thick, label={[text=red] above:-6}] (20) at (0, 6) { $v_{14}$};

\path[->, very thick] (18) edge (16);
\path[->, very thick] (16) edge (20);
\path[->, very thick] (20) edge (22);
\path[->, very thick] (22) edge (18);

\end{tikzpicture}
\caption{ Labeling of $ 4\ar{C_4}$ as a $2-$nodes ornament, that can be attached to any DDMOG with $n\geq6$ vertices. The labeling is defined as $h(v_i)=n+i$, $1\leq i \leq 16$, for a fixed $n\geq 6$. The weight values induced by the labeling are displayed in red. See Figure~\ref{fig:4partsOrn} ($n=6$) for a DDMOG  that utilizes this labeling.} 
\label{fig:4C4}
\end{figure}

\subsection{Outline}
So far we have presented relevant known results, and we have introduced the notion of an ornament; we also offered an initial example of a DDMOG created via a weighted sum with a $2-$nodes ornament. 
 In Section~\ref{sectionthms}, we provide several results on the existence of  $s-$nodes ornaments, for various values of $s$, where the underlying graph of each ornament is  $4{C_L}$.
 Section~\ref{proofssection} contains the proofs of these results.     In Section~\ref{summary} we summarize the content of the paper and we reflect on additional open questions.  Throughout the paper we make references to  relevant figures, which are located as close as possible to the place of mention.
 
\begin{figure}
\centering
\begin{tikzpicture}
[scale=.85, node distance=1.4cm, base node/.style={circle,draw,minimum size=23pt}]

\node[base node, very thick] (2) at (-3, 2) {2};
\node[base node, very thick] (3) at (-1, 2) {3};
\node[base node, very thick] (4) at (1, 2) {4};
\node[base node, very thick] (5) at (3, 2) {5};

\node[base node, very thick] (6) at (0, 0) {6};
\node[base node, very thick] (1) at (0, 4) {1};
\node[base node, very thick] (7) at (-8, 4) {7};
\node[base node, very thick] (13) at (-6, 6) {13};

\node[base node, very thick] (9) at (-6, 4) {9};
\node[base node, very thick] (11) at (-8, 6) {11};
\path[->, very thick] (7) edge (9);
\path[->, very thick] (9) edge (13);
\path[->, very thick] (13) edge (11);
\path[->, very thick] (11) edge (7);

\path[->, very thick, blue] (7) edge[bend right=13] (2);
\path[->, very thick, blue] (2) edge[bend right=13] (13);
\path[->, very thick, red] (11) edge[bend right=27] (6);
\path[->, very thick, red] (6) edge[bend left=16] (9);

\node[base node, very thick] (10) at (-6, -4) {10};
\node[base node, very thick] (12) at (-8, -2) {12};

\node[base node, very thick] (8) at (-8, -4) {8};
\node[base node, very thick] (14) at (-6, -2) {14};
\path[->, very thick] (8) edge (10);
\path[->, very thick] (10) edge (14);
\path[->, very thick] (14) edge (12);
\path[->, very thick] (12) edge (8);

\path[->, very thick, blue] (8) edge[bend left=75] (2);
\path[->, very thick, blue] (2) edge[bend right=13] (14);
\path[->, very thick, red] (12) edge[bend left=13] (6);
\path[->, very thick, red] (6) edge[bend right=6] (10);

\node[base node, very thick] (15) at (-12, 4) {15};
\node[base node, very thick] (21) at (-10, 6) {21};

\node[base node, very thick] (17) at (-10, 4) {17};
\node[base node, very thick] (19) at (-12, 6) {19};

\path[->, very thick, red] (6) edge[bend left=36] (19);
\path[->, very thick, red] (17) edge[bend right=27] (6);
\path[->, very thick, blue] (21) edge[bend right=36] (2);
\path[->, very thick, blue] (2) edge[bend left=27] (15);

\path[->, very thick] (17) edge (15);
\path[->, very thick] (15) edge (19);
\path[->, very thick] (19) edge (21);
\path[->, very thick] (21) edge (17);

\node[base node, very thick] (16) at (-2, -4) {16};
\node[base node, very thick] (18) at (0, -4) {18};

\node[base node, very thick] (22) at (0, -2) {22};
\node[base node, very thick] (20) at (-2, -2) {20};

\path[->, very thick, red] (6) edge (20);
\path[->, very thick, red] (18) edge[bend right=27] (6);
\path[->, very thick, blue] (22) edge  (2);
\path[->, very thick, blue] (2) edge[bend right=13] (16);

\path[->, very thick] (18) edge (16);
\path[->, very thick] (16) edge (20);
\path[->, very thick] (20) edge (22);
\path[->, very thick] (22) edge (18);

\path[->, very thick] (2) edge (3);
\path[->, very thick] (5) edge[bend right=30] (3);
\path[->, very thick] (5) edge (4);
\path[->, very thick] (2) edge[bend left=30] (4);

\path[->, very thick] (6) edge (2);
\path[->, very thick] (3) edge (6);
\path[->, very thick] (4) edge (6);
\path[->, very thick] (6) edge (5);
 

 \path[->, very thick] (1) edge[bend right=30] (2);
 \path[->, very thick] (1) edge[bend left=30] (5);
 \path[->, very thick] (3) edge (1);
\path[->, very thick] (4) edge (1);
\end{tikzpicture}
\caption{ {DDM } labeling of $\ar{G_0} \oplus_{\text{wt}} 4\ar{C_4}$. } 
\label{fig:4partsOrn}
\end{figure}
\section{Main results and examples}\label{sectionthms}

It is known \cite{AAetc}  that $k$ copies of an oriented $4-$cycle form  a $1-$node ornament where $k\geq 1$; as shown in subsection~\ref{squareO}, four copies of an oriented  $4-$cycle   also form a $2-$nodes ornament.
We now generalize these initial results and show that $4\ar{C_L}$ (with mild constraints on $L$) is a $s-$ node ornament, with $2\leq s \leq 6$:
\begin{theorem}\label{thms}
    Let $\ar{G_0}$ be a DDMOG with $n$ vertices, and let  $s \in \{2,3, 4,5,6\}$. Let $L$ be an integer such that  $a_s\leq L\le b_s$, where    $a_s$, $b_s$ take values as listed in Table~\ref{s-table}, with an additional constraint   when $s=3$ ($L$ is assumed to be even in this case).
Let   $H=4 C_L$. 
There exists 
     an orientation on  $ {H} $, paired with  a  labeling function $h$, such that 
     \begin{itemize}
         \item[a.] the oriented graph $ \ar{H}$ is a $s-$nodes ornament, where the labels of the nodes are as listed in Table~\ref{s-table}, and 
      \item[b.]  $\displaystyle  \ar{G}:=\ar{G_0}\oplus_{\text{wt}_h} \ar{H}$  is a DDMOG.
      \end{itemize}
\end{theorem}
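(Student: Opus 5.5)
The plan is to reduce everything to Theorem~\ref{thmWgt1}. Label the $4L$ vertices of $H=4C_L$ by $h(v_i)=n+i$, $1\le i\le 4L$. Then $\ar{G}=\ar{G_0}\oplus_{\text{wt}_h}\ar{H}$ is a DDMOG provided two things hold. First, every weight satisfies $|\text{wt}_h(v)|\le n$. Second, for every $i\ge 1$ the balance condition \eqref{condthm1} holds, namely
\[
\sum_{v\in V_h^i(\ar{H})}h(v)=\sum_{v\in V_h^{-i}(\ar{H})}h(v).
\]
The case $i=0$ is automatic, since no vertex of a cycle has weight $0$ by Remark~\ref{rem1}. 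The nodes of $\ar{G_0}$ that receive new edges are exactly the vertices whose labels equal the absolute values of the weights occurring in $\ar{H}$. So part (a) amounts to arranging that exactly $s$ distinct absolute weights $w_1,\dots,w_s$ occur, and that these are the node labels of Table~\ref{s-table}. Part (b) then follows from Theorem~\ref{thmWgt1}, with $n\ge\max_j w_j$ as the size hypothesis.

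For the construction, I would imitate the $4\ar{C_4}$ example of Figure~\ref{fig:4C4} and Figure~\ref{fig:b}. Use two ``positive'' copies and two ``mirror'' copies: the second pair is obtained from the first by reversing orientation, after shifting the labels by a fixed constant. At a vertex $v_i$ we have $\text{wt}_h(v_i)=h(v_{i-1})-h(v_{i+1})$, a difference of labels, so shifting all labels of a cycle by a constant leaves the weights unchanged, while reversing the orientation negates them. Suppose copy $A$ and a shifted, reversed copy $A'$ have their vertices in corresponding positions. Then the vertices of weight $+w$ in $A$ correspond to the vertices of weight $-w$ in $A'$, and vice versa. Summing labels over $A\cup A'$, the two sides of \eqref{condthm1} differ only by (shift)$\times$(counts). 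That discrepancy is cancelled by a second pair $B,B'$ in which the roles of the shifts are swapped; this is exactly the $v_1..v_8$ versus $v_9..v_{16}$ pattern in Figure~\ref{fig:4C4}. So I would interleave labels as in that figure: copy $j$ uses labels $\equiv j \pmod 4$ in the first block $n+1,\dots,n+2L$, or the analogous residues in the second block. Each copy's cyclic order is chosen so that consecutive differences $h(v_{i-1})-h(v_{i+1})$ take only the values $\pm w_1,\dots,\pm w_s$. In addition, the number of vertices of each weight $+w_j$ must equal the number of weight $-w_j$ within each copy, which is consistent with the zero total weight of Remark~\ref{rem1}.

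Concretely, I would write the vertex sequence of one cycle as a walk on labels with steps $\pm 4$ (or $\pm 2$ across the interleaving), so that the second-neighbour differences lie in a small set determined by $s$. The range $a_s\le L\le b_s$ and the parity requirement for $s=3$ in Table~\ref{s-table} should be exactly the conditions under which such a closed walk, using each label once, produces precisely $s$ distinct absolute weights. For instance, an even $L$ is needed for $s=3$ to pair up the steps symmetrically. I would handle each $s\in\{2,\dots,6\}$ as a separate case with an explicit cyclic order, parametrized by $L$.

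The main obstacle is this case-by-case design of the cyclic orders inside one copy. Each one must close up into a single cycle, realize exactly $s$ absolute weight values matching the table, and keep the $\pm w$ multiplicities balanced; all of this has to hold uniformly in $L$ over the allowed range. Once that is done, verifying \eqref{condthm1} reduces to a mechanical cancellation between the four copies via the shift-and-reverse symmetry, and Theorem~\ref{thmWgt1} finishes the proof.
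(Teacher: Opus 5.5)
\textbf{There is a genuine gap.} Your framework is sound and matches the paper's: reduce to Theorem~\ref{thmWgt1}, and balance \eqref{condthm1} by giving two copies one orientation with label shifts $0$ and $3L$, and the other two copies the reversed orientation with shifts $L$ and $2L$. Then at each cycle position $x+(x+3L)=(x+L)+(x+2L)$. But the content of the theorem is the explicit cyclic orders that produce exactly the node labels of Table~\ref{s-table}. You leave these as ``the main obstacle'', and the concrete directions you give cannot produce them.

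Suppose, as you propose, each copy's labels lie in a single residue class (mod $2$ as in Figure~\ref{fig:4C4}, or mod $4$ as you write), or the cycle is a walk with steps $\pm2,\pm4$. Then every weight $h(v_{i-1})-h(v_{i+1})$ is even, because both neighbours lie in the same copy. Yet every case of the table except $s=2$ with $L$ even needs an odd node label:
\begin{itemize}
\item $1$ and $3$ for $s=4,5,6$;
\item $L-5$ and $L-3$ for $s=3$, since $L$ is even there;
\item $L-2$ for $s=2$ with $L$ odd.
\end{itemize}

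Your additional requirement that $+w_j$ and $-w_j$ occur equally often \emph{within each copy} also fails. No weight is $0$, so this would split the $L$ vertices of a copy into classes of even size. That is impossible for odd $L$, so it excludes every odd-$L$ case of $s=2,4,5,6$. The requirement is also unnecessary: your own second pair $B,B'$ with swapped shift roles cancels the (shift)$\times$(count difference) discrepancy whatever the counts are.

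The missing idea is to drop the interleaving and use consecutive blocks, $h(v_i^{(j)})=n+jL+l(i)$, where the cyclic order $l$ is a small perturbation of $1,2,\dots,L$. In the positively oriented copies, the natural order gives weight $-2$ everywhere except $L-2$ at the two wrap-around vertices; this is case $s=2$, $L\neq 4$. The orders below give the following absolute weights:
\begin{itemize}
\item $1,2,4,3,5,\dots,L$ gives $\{1,2,3,L-2\}$, which is case $s=4$;
\item $1,3,2,4,5,\dots,L$ gives $\{1,2,3,L-3,L-2\}$, which is case $s=5$;
\item $1,3,4,2,5,\dots,L$ gives $\{1,2,3,4,L-3,L-2\}$, which is case $s=6$;
\item for even $L$, the odds-then-evens order $1,3,\dots,L-1,2,4,\dots,L$ gives $\{4,L-5,L-3\}$, which is case $s=3$.
\end{itemize}
Each node check then reduces to the identity above. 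Theorem~\ref{thmWgt1}, or a direct computation as in the paper, finishes the proof.
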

%
%
\begin{table}[!h]
\begin{center}
\caption{Lower and upper bounds on $L$ for a $s-$nodes ornament $4\ar{C_L}$.
}
\label{s-table}
\begin{tabular}{||c c c c c||} 
 \hline
 $s$ & $a_s$ & $b_s$ & Nodes labels & Constraint\\ [0.5ex] 
 \hline\hline
 2 & 3 & $n+2$ & \hspace{-10mm} $2, \, L-2$ & \\ 
 \hline
 3 & 6 & $n+5$ &  \; $ 4, \, L-5, \, L-3$ & only for $L=2k $\\
 \hline
 4 & 6 & $n+2$ & $1,\, 2, \, 3, \, L-2$ &  \\
 \hline
 5 & 7 & $n+3$ & \;\;\;\;\;\;\;\;\;\; $1,\, 2, \, 3, \, L-3, \, L-2$ &  \\
 \hline
 6 & 8 & $n+3$ & \;\;\; \;\;\;\;\;\;\;\;\;\; $1,\, 2, \, 3, \, 4, \,  L-3, \, L-2$ &   \\ [1ex] 
 \hline
\end{tabular}
\end{center}
\end{table}

We placed the proof of Theorem~\ref{thms} in    Section~\ref{proofssection}. Here we make a  remark on the labeling pattern as utilized in the proof: The labels in each cycle copy  will have a {\it{repetitive, consecutive}} pattern (with translation shifts in the indexing when needed).  
 More specifically, let $\{l(i)\}_{i=1}^L$ be a permutation of the set $\{1, 2, \hdots, L\}$. Given the vertex sets   $  V({H_j})=\{v_{i}^{(j-1)}\}_{i=1}^L$  of the $L-$cycles  $H_j$, $j\in \{1,2,3,4\}$, for the labeling $h$ we  have:
  \begin{equation}\label{repetPatt}
  \text{$h(v_{i}^{(j-1)})=n+l(i)+(j-1)L$ for $1\leq j \leq 4$, $1\leq i \leq L$. }
  \end{equation}
\noindent Observe that when using the labeling pattern \eqref{repetPatt}, $L$ consecutive integers are used in the labeling  within a single cycle. 
Next, we offer a few comments on  each separate case:
 
 $s=2$:    Figure~\ref{4kL} illustrates the labeling of the ornament as used in the proof for any $L\neq 4$. The special case when $L=4$ is already reviewed in subsection~\ref{squareO} (Figure~\ref{fig:4partsOrn}), with a different labeling scheme. 
In fact,   the labeling scheme as used in the proof of Theorem~\ref{thms} fails to generate a $2-$nodes ornament in the case $L=4$, as then also  $L-2=2$.
A detailed overview of an example construction of a $2-$nodes ornament (when $L=3$) and the resulting DDMOG is given in Figure~\ref{fig:c}. Another example of an ornament relevant to Theorem~\ref{thms} (when $s=2$) is placed in  Figure~\ref{fig:triangles2onsingletons}; we conjecture that this DDMOG has the minimal number of edges among all DDMOGs with $14$ vertices.

$s=3$:  In this case the labeling $h$ we use  comes with an extra  constraint: $L$ must be an even integer. 
The three nodes
will have labels $4$, $L-3$ and $L-5$; thus, it must be that $L\geq 6$. Notice that   when $L$ is odd, the construction    will result with a $2-$nodes ornament  - those nodes would be labeled with $4$ and $L-4$.  
 It is possible to create a $3-$nodes ornament when  $L$ is odd,  but instead of a generalized statement that would hold true  for  all odd $L$ we can only  offer an example  of a $3-$nodes ornament when $L=5$ (Figure~\ref{fig:pentasimpleInversion}).

\begin{figure}
\centering
\begin{tikzpicture}
[scale=.9, node distance=1.4cm, base node/.style={circle,draw,minimum size=27pt}]

\node[base node, very thick, label={[text=red]left:+2}](n+1) at (-2.5, 2) {$v_{1}$};
  
\node[base node, very thick, label={[text=red]right:-1}] (n+3) at (-1, 3) {$v_{3}$};
 
\node[base node, very thick, label={[text=red] right:-1}] (n+2) at (0.5, 2){$v_{2}$};

\node[base node, very thick, label={[text=red]right:-3}] (n+4) at (0.5,0) {$v_{4}$};
\node[base node, very thick, label={[text=red] left:+3}] (n+5) at (-2.5,0){$v_{5}$};
 
\path[->, very thick] (n+1) edge (n+3);
\path[->, very thick] (n+3) edge (n+2);
 \path[->, very thick] (n+2) edge (n+4);
 
 \path[->, very thick] (n+4) edge  (n+5);
\path[->, very thick] (n+5) edge (n+1);

\node[base node, very thick, label={[text=red]left:-2}](n+6) at (3.5, 2) {$v_{6}$};
  
\node[base node, very thick, label={[text=red]right:+1}] (n+8) at (5, 3) {$v_{8}$};
 
\node[base node, very thick, label={[text=red]right:+1}] (n+7) at (6.5, 2) {$v_{7}$};

\node[base node, very thick, label={[text=red]right:+3}] (n+9) at (6.5,0) {$v_{9}$};
 
\node[base node, very thick, label={[text=red] left:-3}] (n+10) at (3.5,0) {$v_{10}$};
 
\path[->, very thick] (n+6) edge (n+10);
\path[->, very thick] (n+10) edge (n+9);
 \path[->, very thick] (n+7) edge (n+8);
 
 \path[->, very thick] (n+9) edge  (n+7);
\path[->, very thick] (n+8) edge (n+6);
\node[base node, very thick, label={[text=red]left:+2}](n+16) at (-2.5, -3) {$v_{16}$};
  
\node[base node, very thick, label={[text=red]right:-1}] (n+18) at (-1, -2) {$v_{18}$};
 
\node[base node, very thick, label={[text=red] right:-1}] (n+17) at (0.5, -3) {$v_{17}$};

\node[base node, very thick, label={[text=red]right:-3}] (n+19) at (0.5,-5) {$v_{19}$};
 
\node[base node, very thick, label={[text=red] left:+3}] (n+20) at (-2.5,-5) {$v_{20}$};
 
\path[->, very thick] (n+19) edge (n+20);
\path[->, very thick] (n+20) edge (n+16);
 \path[->, very thick] (n+16) edge (n+18);
 
 \path[->, very thick] (n+18) edge  (n+17);
\path[->, very thick] (n+17) edge (n+19);

\node[base node, very thick, label={[text=red]left:-2}](n+11) at (3.5, -3) {$v_{11}$};
  
\node[base node, very thick, label={[text=red]right:+1}] (n+13) at (5, -2) {$v_{13}$};
 
\node[base node, very thick, label={[text=red]right:+1}] (n+12) at (6.5, -3) {$v_{12}$};

\node[base node, very thick, label={[text=red] right:+3}] (n+14) at (6.5,-5) {$v_{14}$};
 
\node[base node, very thick, label={[text=red]  left:-3}] (n+15) at (3.5,-5) {$v_{15}$};
 
\path[->, very thick] (n+13) edge (n+11);
\path[->, very thick] (n+12) edge (n+13);
 \path[->, very thick] (n+14) edge (n+12);
 
 \path[->, very thick] (n+15) edge  (n+14);
\path[->, very thick] (n+11) edge (n+15);
 
\end{tikzpicture}
\caption{Labeling of a 3-nodes ornament, $4\ar{C_5}$: $h(v_j)=n+j$, $1\leq j \leq 20$, for any $n\geq 3$. The values of $\text{wt}_h$ are marked in red.}
\label{fig:pentasimpleInversion}
\end{figure}

\begin{figure}
\centering
\begin{tikzpicture}
[scale=.9, node distance=1.4cm, base node/.style={circle,draw,minimum size=27pt}]

\node[base node, very thick, label={[text=red]left:+4}](n+2) at (-2.5, 2) {$v_{2}$};
  
\node[base node, very thick, label={[text=red]right:-1}] (n+1) at (-1, 3) {$v_{1}$};
 
\node[base node, very thick, label={[text=red] right:-3}] (n+3) at (0.5, 2) {$v_{3}$};

\node[base node, very thick, label={[text=red]right:-2}] (n+4) at (0.5,0) {$v_{4}$};
 
\node[base node, very thick, label={[text=red] left:+2}] (n+5) at (-2.5,0) {$v_{5}$};
 
\path[->, very thick] (n+1) edge (n+3);
\path[->, very thick] (n+3) edge (n+4);
 \path[->, very thick] (n+4) edge (n+5);
 
 \path[->, very thick] (n+5) edge  (n+2);
\path[->, very thick] (n+2) edge (n+1);


\node[base node, very thick, label={[text=red]right:+1}] (n+6) at (5, 3) {$v_{6}$};
 
\node[base node, very thick, label={[text=red]right:+3}] (n+8) at (6.5, 2) {$v_{8}$};

\node[base node, very thick, label={[text=red]right:+2}] (n+9) at (6.5,0) {$v_{9}$};
 
\node[base node, very thick, label={[text=red] left:-2}] (n+10) at (3.5,0) {$v_{10}$};
\node[base node, very thick, label={[text=red]left:-4}](n+7) at (3.5, 2) {$v_{7}$};
 
\path[->, very thick] (n+6) edge (n+7);
\path[->, very thick] (n+7) edge (n+10);
 \path[->, very thick] (n+10) edge (n+9);
 
 \path[->, very thick] (n+9) edge  (n+8);
\path[->, very thick] (n+8) edge (n+6);
 
\node[base node, very thick, label={[text=red]left:+4}](n+17) at (-2.5, -3) {$v_{17}$};
  
\node[base node, very thick, label={[text=red]right:-1}] (n+16) at (-1, -2)  {$v_{16}$};
 
\node[base node, very thick, label={[text=red] right:-3}] (n+18) at (0.5, -3) {$v_{18}$};

\node[base node, very thick, label={[text=red]right:-2}] (n+19) at (0.5,-5)  {$v_{19}$};
 
\node[base node, very thick, label={[text=red] left:+2}] (n+20) at (-2.5,-5)  {$v_{20}$};
 
\path[->, very thick] (n+16) edge (n+18);
\path[->, very thick] (n+18) edge (n+19);
 \path[->, very thick] (n+19) edge (n+20);
 
 \path[->, very thick] (n+20) edge  (n+17);
\path[->, very thick] (n+17) edge (n+16);

\node[base node, very thick, label={[text=red]left:-4}](n+12) at (3.5, -3)  {$v_{12}$};
  
\node[base node, very thick, label={[text=red]right:+1}] (n+11) at (5, -2) {$v_{11}$};
 
\node[base node, very thick, label={[text=red]right:+3}] (n+13) at (6.5, -3)  {$v_{13}$};

\node[base node, very thick, label={[text=red] right:+2}] (n+14) at (6.5,-5)  {$v_{14}$};
 
\node[base node, very thick, label={[text=red]  left:-2}] (n+15) at (3.5,-5)  {$v_{15}$};
 
\path[->, very thick] (n+13) edge (n+11);
\path[->, very thick] (n+14) edge (n+13);
 \path[->, very thick] (n+15) edge (n+14);
 
 \path[->, very thick] (n+12) edge  (n+15);
\path[->, very thick] (n+11) edge (n+12);

\end{tikzpicture}
\caption{Labeling of a $4-$node ornament, $4\ar{C}_5$:   $h(v_j)=n+j$, $1\leq j \leq 20$, for any $n\geq 4$. The values of $\text{wt}_h$ are marked in red.}
\label{fig:pentaInversionAnother}
\end{figure} 
  
 $s=4$: 
 The four nodes of the ornament   will have labels $1$, $2$, $3$, and $L-2$; thus, it must be that $L\ge 6$. If $L<6$,  then we could not produce four distinct labels for the four nodes with our choice of labeling $h$. However, using another labeling pattern we have found that a $4-$nodes ornament for $L=5$ exists  (see Figure~\ref{fig:pentaInversionAnother}). By Lemma~\ref{minLvsS}, it must be that $L\geq 4$; however, we do not know if  in  the critical case, when $L=4$, there exists an orientation and an associated labeling $h$ so that $4{C_4}$ is a $4-$nodes ornament.

 $s=5$: The labeling function $h$ in this case has a simple increasing pattern, except for one inversion between the second and third vertex in each cycle. The weights induced by $h$ by absolute value are  $1$, $2$, $3$, $L-3$, and $L-2$ if  $n+1\geq L\geq 7$.   Figure~\ref{5nodeweights} depicts the generalized case.

 $s=6$:  Figure~\ref{6nodeAttachGen} illustrates the generalized labeling $h$ of the $6-$nodes ornament  $4\ar{C_L}$, that can be attached to any DDMOG with $n\geq L-3$ vertices; in addition, we require that  $ L \geq 8$. The six nodes must have labels $1, 2, 3, 4,  L-3$ and $ L-2$.  

 \begin{remark}\label{rem24}
     The DDMOG in Figure~\ref{fig:triangles2onsingletons} is created by attaching a $2-$nodes ornament to  $2\ar{K_1}$, a trivial DDMOG. This DDMOG has $14$ vertices and only $24$ edges - in comparison,  the DDMOG $\ar{R_{14}}$ as defined in \cite{AAetc} via vertex coalescence  has significantly more edges. We anticipate that   utilizing the technique of weighted sums with oriented cycles  will produce {\it sparser} DDMOGs (in terms of edges).  
\end{remark}

 \begin{figure}
\centering
\begin{tikzpicture}
[scale=.85, node distance=1.4cm, base node/.style={circle,draw,minimum size=23pt}]
\node[base node, very thick](1) at (-8, 0) {1};
  
\node[base node, very thick ] (2) at (2,3) {2};

\node[base node, very thick](3) at (-4.5, 3) {3};
  
\node[base node, very thick ] (4) at (-2.5, 3) {4};
 
\node[base node, very thick ] (5) at (-3.5, 1.5) {5}; 
\node[base node, very thick ] (6) at (3, 2) {6};
 
\node[base node, very thick ] (7) at (5,2) {7};
\node[base node, very thick ] (8) at (4,0.5) {8};

\node[base node, very thick ](9) at (-5.5, -2) {9};
  
\node[base node, very thick ] (10) at (-3.5, -2) {10};
 
\node[base node, very thick ](11) at (-4.5, -3.5) {11};

\node[base node, very thick ] (12) at (-1, -.5) {12};
\node[base node, very thick ] (13) at (1,-.5) {13};
\node[base node, very thick ](14) at (0,-2) {14};


\path[->, very thick] (3) edge (4);
\path[->, very thick] (4) edge (5);
 \path[->, very thick] (5) edge (3);
 
 \path[->, very thick] (6) edge  (8);

\path[->, very thick] (8) edge (7);
\path[->, very thick] (7) edge (6);
 
\path[->, very thick] (9) edge (11);
\path[->, very thick] (11) edge (10);
 \path[->, very thick] (10) edge (9);
 
 \path[->, very thick] (12) edge  (13);
\path[->, very thick] (13) edge (14);
\path[->, very thick] (14) edge (12);
 
 \path[->, very thick, teal] (3) edge (1);
\path[->, very thick, orange] (2) edge (13); 
\path[->, very thick, teal] (14) edge  (1); 

 \path[->, very thick, teal] (12) edge (1);
\path[->, very thick, orange] (2) edge (4); 
\path[->, very thick, teal] (5) edge  (1); 

 \path[->, very thick, orange] (7) edge (2);
\path[->, very thick, teal] (1) edge  (6); 
\path[->, very thick, teal] (1) edge  (8); 

 \path[->, very thick, orange] (10) edge (2);
\path[->, very thick, teal] (1) edge [bend right=11](11); 
\path[->, very thick, teal] (1) edge   (9); 

\end{tikzpicture}
\caption{DDM labeling of $2\ar{K_1}\oplus_{wt} 4\ar{C_3}$.} 
\label{fig:triangles2onsingletons}
\end{figure}

The proof of Theorem~\ref{thms} offers contructions of  $s-$node ornaments, $ 2 \leq s \leq 6$. We anticipate that other
ways to organize the vertex labels will lead to the  creation of $s-$nodes ornaments for values of $s>6$, provided $L$ is large enough; for instance,  we were able to create a $7-$nodes ornament, $7\ar{K_1}\oplus 4\ar{C_9}$ (Figure \ref{7nodeAttach1NewDDMOG}). 
Instead of pursuing formalized ways of generating $s-$nodes ornaments for $s >6$, we prefer to answer the following question: {\it Given a fixed $L\geq 5$,  what is the largest value $s$ can have such that $4\ar{C_L}$  is a $s-$nodes ornament?} We will first show that $s$ cannot be larger than $L$:

  \begin{lemma}\label{minLvsS}
Let $L\geq 3$ and $s\geq 1$. If  $4\ar{C}_L$ can be labeled to form an $s$-nodes ornament, then $s \le L$.
\end{lemma}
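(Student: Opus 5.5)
The plan is to count vertex--node incidences. Each vertex of $4\ar{C}_L$ must be attached to at least one node, and each node must absorb at least four vertices of the ornament. Since $4\ar{C}_L$ has exactly $4L$ vertices, this gives $4s\le 4L$.

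\emph{Step 1: every vertex of $\ar{H}=4\ar{C}_L$ is attached.} By Remark~\ref{rem1}, equation~\eqref{ineqateachv}, every vertex $v$ of every copy of $\ar{C}_L$ satisfies $\text{wt}_h(v)=h(v_{i-1})-h(v_{i+1})\neq 0$ for any bijective labeling $h$. For the combined graph to be a DDMOG, each such $v$ must therefore have at least one edge to a vertex of $\ar{G}$ to cancel this weight. In the weighted-sum framework of Definition~\ref{def:whtsumN} and Theorem~\ref{thmWgt1}, $v$ is joined to exactly one vertex of $\ar{G}$, namely the one labeled $|\text{wt}_h(v)|$. So the $4L$ vertices of $\ar{H}$ are partitioned into classes $A_c=V_h^{c}(\ar{H})\cup V_h^{-c}(\ar{H})$, one class for each node label $c$.

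\emph{Step 2: a balancing equation at each node.} Fix a node $u$ with label $c$. Because the labeling of $\ar{G_0}$ is preserved and $\text{wt}_g(u)=0$ in $\ar{G_0}$, the new edges at $u$ must contribute zero net weight. This is exactly condition~\eqref{condthm1}:
\[
\sum_{v\in V_h^{c}(\ar{H})}h(v)=\sum_{v\in V_h^{-c}(\ar{H})}h(v).
\]
Both sides must be nonempty, since the labels are positive. Neither side can consist of a single vertex opposite a single vertex, because $h$ is injective. Hence $|A_c|\ge 3$.

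\emph{Step 3 (main obstacle): rule out $|A_c|=3$.} Suppose one side is a single vertex and the other is two vertices, so that $h(a)=h(b)+h(b')$. All labels lie in $\{n+1,\dots,n+4L\}$, so this forces $n+4L\ge 2n+3$. That is not contradictory for small $n$, so I would combine it with the cycle structure. In a copy of $\ar{C}_L$, the weights $h(v_{i-1})-h(v_{i+1})$ form a telescoping pattern summing to zero. In addition, $|\text{wt}_h(v)|\le n$ is required, since the node label must belong to $\ar{G}$.

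I would first try a parity argument on $\sum_{v\in A_c}\text{wt}_h(v)$ together with the labels involved. If that fails, I would instead weaken the counting: double count over the four copies, show that every class $A_c$ meets at least two copies in a balanced way, and aim to force $|A_c|\ge 4$ on average.

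Once each node absorbs at least four vertices, summing over the $s$ nodes gives $4s\le\sum_c|A_c|=4L$, so $s\le L$.
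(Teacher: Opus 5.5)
\textbf{Verdict: there is a genuine gap at Step 3, which is the crux, and you leave it open.}

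Your Steps 1 and 2 match the paper's proof. Every vertex of $4\ar{C}_L$ has nonzero weight and lies in exactly one class $A_c$. Both sides of each class are nonempty, and injectivity of $h$ rules out a $1$-versus-$1$ split.

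Step 3, however, is not proved. The parity idea and the ``four vertices on average'' double count are only suggestions, and, as you note yourself, $n+4L\ge 2n+3$ is no contradiction for small $n$. The telescoping identity $\sum_v \text{wt}_h(v)=0$ only yields $\sum_c c\,(|V_h^{c}(\ar{H})|-|V_h^{-c}(\ar{H})|)=0$, which is compatible with unbalanced classes. Neither that identity nor a single balance equation at one fixed $n$ visibly excludes a class with $h(a)=h(b)+h(b')$. So $|A_c|\ge 4$ is unproved, and without it the count $4s\le 4L$ does not follow.

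The missing idea is the universality built into the notion of an ornament, which is exactly what the paper uses. The same labeling pattern $h(v)=n+\ell(v)$ must work for every DDMOG $\ar{G_0}$, i.e.\ for all $n\ge \max_v|\text{wt}_h(v)|$. Since $\text{wt}_h(v_i)=\ell(v_{i-1})-\ell(v_{i+1})$ does not depend on $n$, the classes $A_c$ are the same for all such $n$. The balance condition at the node labeled $c$ then reads
$|V_h^{c}(\ar{H})|\,n+\sum_{V_h^{c}(\ar{H})}\ell=|V_h^{-c}(\ar{H})|\,n+\sum_{V_h^{-c}(\ar{H})}\ell$
for infinitely many $n$. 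Comparing coefficients of $n$ forces $|V_h^{c}(\ar{H})|=|V_h^{-c}(\ar{H})|=m_c$. This excludes every unbalanced split at once. Injectivity then gives $m_c\ge 2$, hence $|A_c|\ge 4$ and $4s\le 4L$.

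Alternatively, once universality is invoked, your own inequality closes Step 3 directly. Attach to $\ar{G_0}=n\ar{K_1}$ with $n\ge 4L$. Then $h(b)+h(b')\ge 2n+3>n+4L\ge h(a)$, so no $1$-versus-$2$ class can occur.
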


\begin{proof} Let $H=4{C_L}$, $L\geq 3$, and let $h: V(H)\rightarrow \{n+1,n+2, \hdots, n+4L\}$  be a bijection, where  $n \in \mathbb{N}^+$.  
  Given  any  DDMOG $\ar{G_0}$ with $n$ vertices, suppose $\ar{G}=\ar{G_0} \oplus_{\text{wt}_h}  \ar{H}$ is a DDMOG on $n+4L$ vertices.  
Let $W_k$ be one of the distinct absolute weight values  of the weight function $\text{wt}_h$;  then  equality~\eqref{condthm1} is satisfied for $i=W_k$. It is easy to see that it must hold $n \geq \max_k  \{ W_k\}$. In addition, because an ornament must work universally for \textit{any} valid DDMOG $\ar{G_0}$,   equality~\eqref{condthm1} must hold (with $i=W_k$) for all such $n \geq \max_k  \{ W_k\}$. Therefore,  the following condition must be satisfied:
\begin{equation*} |V_h^{W_k}(\ar{H})| = |V_h^{-W_k}(\ar{H})| = m_k.\end{equation*} 
This means that there are exactly $m_k$ 'positive' vertices and $m_k$ 'negative' vertices for the weight class $W_k$. 
Since the labeling $h$ is a bijection, all vertex labels in the ornament must be distinct. If $m_k = 1$, there exists exactly one vertex $v \in V_h^{W_k}({H})$ and another vertex $u \in V_h^{-W_k}({H})$, forcing the label sum condition to simplify to $h(v) = h(u)$, which cannot happen as $h$ is a bijection. Therefore, we must have $m_k \ge 2$ for every weight class, meaning  at least $2m_k \ge 4$ vertices are involved. 
Due to a generalization of inequality \eqref{ineqateachv}, no vertex has a weight $\text{wt}_h$ of $0$.  Also, every single vertex in $V({H})$ belongs to exactly one of the $s$ distinct non-zero weight classes. Because each of the $s$ classes requires at least $4$ vertices, any valid $s$-nodes ornament must contain at least $4s$ vertices. 
Then it must hold: $4L \ge 4s$; thus $ s \le L.$
\end{proof}
 

\begin{figure}
\centering
\begin{tikzpicture}
[scale=.94, node distance=1.4cm, base node/.style={circle,draw,minimum size=23pt}]

\node[base node, very thick] (8) at (-7, 2) {8};
\node[base node, very thick] (10) at (-5, 2) {10};
\node[base node, very thick] (12) at (-3, 2) {12};
\node[base node, very thick] (14) at (-1, 2) {14}; 
\node[base node, very thick] (16) at (0.5, 1) {16}; 
\node[base node, very thick](13) at (-1, 0) {13};  
\node[base node, very thick] (15) at (-7, 0) {15};
\node[base node, very thick] (11) at (-5, 0) {11};
\node[base node, very thick] (9) at (-3, 0) {9};

\node[base node, very thick] (17) at (-7, -2) {17};
\node[base node, very thick] (19) at (-5, -2) {19};
\node[base node, very thick] (21) at (-3, -2) {21};
\node[base node, very thick] (23) at (-1, -2) {23}; 
\node[base node, very thick] (25) at (0.5, -3) {25};
\node[base node, very thick](22) at (-1, -4) {22};  
\node[base node, very thick] (24) at (-7, -4) {24};
\node[base node, very thick] (20) at (-5, -4) {20};
\node[base node, very thick] (18) at (-3, -4) {18};

\node[base node, very thick](26) at (-7, -6) {26};
\node[base node, very thick] (28) at (-5, -6) {28};
\node[base node, very thick] (30) at (-3, -6) {30};
\node[base node, very thick] (32) at (-1, -6) {32};
\node[base node, very thick] (34) at (0.5, -7) {34};
\node[base node, very thick](31) at (-1, -8) {31};
\node[base node, very thick] (33) at (-7, -8) {33};
\node[base node, very thick] (29) at (-5, -8) {29};
\node[base node, very thick] (27) at (-3, -8) {27};

\node[base node, very thick](35) at (-7, -10) {35};
\node[base node, very thick] (37) at (-5, -10) {37};
\node[base node, very thick] (39) at (-3, -10) {39};
\node[base node, very thick] (41) at (-1, -10) {41}; 
\node[base node, very thick] (43) at (0.5, -11) {43};
\node[base node, very thick](40) at (-1, -12) {40};  
\node[base node, very thick] (42) at (-7, -12) {42};
\node[base node, very thick] (38) at (-5, -12) {38};
\node[base node, very thick] (36) at (-3, -12) {36};

\node[base node, very thick, ](1) at (3, -5) {1};
\path[->, very thick] (1) edge (34);
\path[->, very thick] (1) edge (25);
\path[->, very thick] (43) edge (1);
\path[->, very thick] (16) edge (1);
\node[base node, very thick, ](7) at (5, -8) {7};
\path[->, very thick] (7) edge (22);
\path[->, very thick] (7) edge (31);
\path[->, very thick] (40) edge[bend right=21]  (7);
\path[->, very thick] (13) edge[bend left=21] (7);

\node[base node, very thick, ](2) at (5, -3) {2};
\path[->, very thick] (9) edge (2);
\path[->, very thick] (2) edge[bend left=27] (18);
\path[->, very thick] (2) edge (27);
\path[->, very thick] (36) edge[bend right=17] (2);

\node[base node, very thick, ](3) at (-9, -7) {3};
\path[->, very thick] (3) edge (33);
\path[->, very thick] (3) edge (24);
\path[->, very thick] (15) edge (3);
\path[->, very thick] (42) edge (3);

\node[base node, very thick, teal](4) at (5, 2) {4};
\node[base node, very thick, teal](4a) at (5, -10) {4'};
\path[->, very thick, teal] (4) edge (14);
\path[->, very thick, teal] (4) edge[bend right=23]  (12);
\path[->, very thick, teal] (4) edge[bend right=27]  (10);
\path[->, very thick,teal] (23) edge (4);
\path[->, very thick,  teal] (19) edge (4);
\path[->, very thick, teal] (21) edge (4);
\path[->, very thick, teal] (4a) edge (41);
\path[->, very thick, teal] (4a) edge[bend right=21]  (39);
\path[->, very thick, teal] (4a) edge[bend right=21]  (37);
\path[->, very thick,teal] (28) edge (4a);
\path[->, very thick,  teal] (30) edge (4a);
\path[->, very thick, teal] (32) edge[bend left=27] (4a);

\node[base node, very thick, ](5) at (-10, -5) {5};
\path[->, very thick] (5) edge (17);
\path[->, very thick] (5) edge (26);
\path[->, very thick] (8) edge (5);
\path[->, very thick] (35) edge[bend left=23] (5);
\node[base node, very thick, ](6) at (-10, -10) {6};
\path[->, very thick] (6) edge (38);
\path[->, very thick] (6) edge[bend right=5] (11);
\path[->, very thick] (29) edge (6);
\path[->, very thick] (20) edge[bend right=-5] (6);

\node[base node, very thick] (8) at (-7, 2) {8};
  
\node[base node, very thick] (10) at (-5, 2) {10};
 \node[base node, very thick] (12) at (-3, 2) {12};

  \node[base node, very thick] (14) at (-1, 2) {14};
 
  \node[base node, very thick] (16) at (0.5, 1) {16};

 \node[base node, very thick](13) at (-1, 0) {13};
  
\node[base node, very thick] (15) at (-7, 0) {15};
 \node[base node, very thick] (11) at (-5, 0) {11};

  \node[base node, very thick] (9) at (-3, 0) {9};

\path[->, very thick] (8) edge (10);
 
\path[->, very thick] (10) edge (12);
 
\path[->, very thick] (12) edge (14);

\path[->, very thick] (14) edge (16);
 
\path[->, very thick] (16) edge (13);

\path[->, very thick] (13) edge (9);

\path[->, very thick] (9) edge (11);

\path[->, very thick] (11) edge (15);
 
\path[->, very thick] (15) edge (8);
 
\node[base node, very thick] (17) at (-7, -2) {17};
  
\node[base node, very thick] (19) at (-5, -2) {19};
 \node[base node, very thick] (21) at (-3, -2) {21};

  \node[base node, very thick] (23) at (-1, -2) {23};
 
  \node[base node, very thick] (25) at (0.5, -3) {25};

 \node[base node, very thick](22) at (-1, -4) {22};
  
\node[base node, very thick] (24) at (-7, -4) {24};
 \node[base node, very thick] (20) at (-5, -4) {20};

  \node[base node, very thick] (18) at (-3, -4) {18};

\path[->, very thick] (17) edge (24);
 
\path[->, very thick] (24) edge (20);
 
\path[->, very thick] (20) edge (18);

\path[->, very thick] (18) edge (22);
 
\path[->, very thick] (22) edge (25);

\path[->, very thick] (25) edge (23);

\path[->, very thick] (23) edge (21);

\path[->, very thick] (21) edge (19);
 
\path[->, very thick] (19) edge (17);

\node[base node, very thick](26) at (-7, -6) {26};
  
\node[base node, very thick] (28) at (-5, -6) {28};
 \node[base node, very thick] (30) at (-3, -6) {30};

  \node[base node, very thick] (32) at (-1, -6) {32};
 
  \node[base node, very thick] (34) at (0.5, -7) {34};

 \node[base node, very thick](31) at (-1, -8) {31};
  
\node[base node, very thick] (33) at (-7, -8) {33};
 \node[base node, very thick] (29) at (-5, -8) {29};

  \node[base node, very thick] (27) at (-3, -8) {27};

\path[->, very thick] (26) edge (33);
 
\path[->, very thick] (29) edge (27);
 
\path[->, very thick] (33) edge (29);

\path[->, very thick] (27) edge (31);
 
\path[->, very thick] (31) edge (34);

\path[->, very thick] (34) edge (32);

\path[->, very thick] (32) edge (30);

\path[->, very thick] (30) edge (28);
 
\path[->, very thick] (28) edge (26);

\node[base node, very thick](35) at (-7, -10) {35};
  
\node[base node, very thick] (37) at (-5, -10) {37};
 \node[base node, very thick] (39) at (-3, -10) {39};

  \node[base node, very thick] (41) at (-1, -10) {41};
 
  \node[base node, very thick] (43) at (0.5, -11) {43};

 \node[base node, very thick](40) at (-1, -12) {40};
  
\node[base node, very thick] (42) at (-7, -12) {42};
 \node[base node, very thick] (38) at (-5, -12) {38};

  \node[base node, very thick] (36) at (-3, -12) {36};

\path[->, very thick] (35) edge (37);
 
\path[->, very thick] (37) edge (39);
 
\path[->, very thick] (39) edge (41);

\path[->, very thick] (41) edge (43);
 
\path[->, very thick] (43) edge (40);

\path[->, very thick] (40) edge (36);

\path[->, very thick] (38) edge (42);

\path[->, very thick] (36) edge (38);
 
\path[->, very thick] (42) edge (35);

\end{tikzpicture}
\caption{DDM labeling of $7\ar{K_1} \oplus 4\ar{C_9}$ (the vertices labeled with $4$ and $4'$ are the same vertex).}
\label{7nodeAttach1NewDDMOG}
\end{figure}


   \begin{theorem}\label{thm:zigzag}
    Let $L\geq 5$, and let $H=4C_L$. There exists an orientation on  $H $, paired with a labeling function $h :  \ar{H} \rightarrow \{1, 2, \hdots, 4L\}$ that has a repetitive, consecutive pattern, such that  $ \ar{H}$ is an $s-$nodes ornament, where $s=L-1$ if $L$ is odd; otherwise $s=(L-2)$.
   \end{theorem}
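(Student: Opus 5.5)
The plan is to reduce the statement to a problem about a single labeled cycle, and then to build that cycle with a zigzag permutation. Write $V(H_j)=\{v_i^{(j-1)}\}_{i=1}^L$ and use the repetitive, consecutive pattern \eqref{repetPatt}, $h(v_i^{(j-1)})=n+l(i)+(j-1)L$, for a permutation $l$ of $\{1,\dots,L\}$ still to be chosen. Orient copies $1$ and $4$ one way and copies $2$ and $3$ the opposite way, as in Figure~\ref{fig:4C4}.

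By Remark~\ref{rem1}, the weight at position $i$ in copy $j$ is $\varepsilon_j w_i$, where $w_i=l(i-1)-l(i+1)$, indices are mod $L$, and $\varepsilon=(+,-,-,+)$. The additive shifts $n+(j-1)L$ cancel in these differences. Now fix a weight value $t>0$.
\begin{itemize}
\item Each position $i$ with $w_i=t$ puts copies $1,4$ into $V_h^{t}$ and copies $2,3$ into $V_h^{-t}$. Both pairs contribute the same label sum $2n+2l(i)+3L$.
\item The same holds symmetrically for positions with $w_i=-t$.
\end{itemize}
Hence equality~\eqref{condthm1} holds for every $i$, for every choice of $l$. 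By Theorem~\ref{thmWgt1}, $\ar{G_0}\oplus_{\text{wt}_h}\ar{H}$ is then a DDMOG whenever $n\ge\max_i|w_i|$. So the number of nodes $s$ equals the number of distinct values $|w_i|$. It remains to choose $l$ with $L-1$ such values when $L$ is odd, and $L-2$ when $L$ is even. Since $|w_i|\le L-1$ and $w_i\neq 0$, the bound $L-1$ is best possible here, consistent with Lemma~\ref{minLvsS}.

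For odd $L$, the key observation is that $w_i$ is the difference of the labels two steps apart. Because $\gcd(2,L)=1$, the positions $1,3,5,\dots$ (mod $L$) run through all $L$ positions in a single cyclic orbit. So I will choose the sequence $b_k=l(2k-1)$ along this orbit to be the graceful zigzag
\[
b=(1,\,L,\,2,\,L-1,\,3,\,L-2,\,\dots),
\]
ending at the middle value $(L+1)/2$. Its $L-1$ consecutive differences are $L-1,L-2,\dots,1$ in absolute value. The closing difference, from the last entry back to $1$, equals $(L-1)/2$ and repeats an earlier value. Translating back gives weights with exactly $L-1$ distinct absolute values, so $s=L-1$ and the construction needs $n\ge L-1$. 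I will verify this on $L=5$ and on $L=9$, comparing with Figure~\ref{7nodeAttach1NewDDMOG}.

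For even $L$, stepping by $2$ splits the positions into two orbits of length $L/2$. Each orbit is a closed cycle whose differences sum to zero, so each orbit must repeat at least one absolute value or use a complementary value, which forces the loss of a second weight class. The plan is to split $\{1,\dots,L\}$ into two sets, for instance by interleaving outer and inner values, and to zigzag each set so that the two orbits realize disjoint ranges of differences. The target is $L-2$ distinct absolute values overall.

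This even case is the main obstacle. A naive split, such as small labels in one orbit and large labels in the other, produces identical difference sets in both orbits. I would first try:
\begin{itemize}
\item orbit A zigzags between the extreme labels, giving differences $L-1,L-3,\dots$;
\item orbit B zigzags the remaining labels, giving differences $L-2,L-4,\dots$.
\end{itemize}
I would then check that only the two closing differences duplicate existing values. I will confirm this on $L=6$ and $L=8$ before writing the general formula for $l$.
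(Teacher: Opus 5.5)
Your reduction is correct and matches the paper's own mechanism. Pairing copies $1,4$ against copies $2,3$ gives the sum $2n+2l(i)+3L$ on both sides, so \eqref{condthm1} holds for every permutation $l$, and $s$ is simply the number of distinct values $|l(i-1)-l(i+1)|$. Your odd case is exactly the paper's labeling in Cases I and II. The table there puts $1,L,2,L-1,\dots$ on $v_1,v_3,\dots,v_L$ and continues the same zigzag on $v_2,v_4,\dots,v_{L-1}$, which is the zigzag along the single step-$2$ orbit. Your orbit viewpoint merges the paper's two residues mod $4$ into one argument. One small point: Figure~\ref{7nodeAttach1NewDDMOG} uses a different labeling with only seven nodes, so it will not confirm your count of eight for $L=9$.

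The gap is the even case. You give no construction, and the one you sketch cannot exist as described. Suppose every path difference on orbit $B$ lies among $L-2,L-4,\dots$, all of which are even. Then all labels on $B$ have the same parity. Since $|B|=L/2$ and each parity class of $\{1,\dots,L\}$ has exactly $L/2$ elements, $B$ is a whole parity class. So $A$ is the other class, every difference on $A$ is even, and $A$ cannot produce $L-1$.

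What works (the paper's Cases III and IV) is to keep the same zigzag $z=(1,L,2,L-1,\dots,L/2+1)$. Place $z_1,\dots,z_{L/2}$ on $v_1,v_3,\dots,v_{L-1}$ and $z_{L/2+1},\dots,z_L$ on $v_2,v_4,\dots,v_L$. Thus $A$ carries the $L/2$ outermost labels and $B$ the $L/2$ middle ones. Since $|z_k-z_{k+1}|=L-k$, the path differences are consecutive runs, not step-$2$ runs: $L-1,\dots,L/2+1$ on $A$, and $L/2-1,\dots,1$ on $B$. The only value lost is the bridging difference $|z_{L/2+1}-z_{L/2}|=L/2$. The two closing differences are $3L/4$ and $L/4$ when $4\mid L$, and $(L-2)/4$ twice when $L\equiv 2\pmod 4$. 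All of these fall inside the two runs, giving exactly $L-2$ weight classes, with $n\ge L-1$.

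Separately, your auxiliary claim that a closed orbit must repeat an absolute difference is false: the cycle $1,2,4$ has differences $1,2,3$. It is also unnecessary, since the theorem only asserts existence for $s=L-2$, not that $L-1$ is unattainable.
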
 
\noindent See subsection~\ref{thm:zigzagproof}  for the proof of Theorem~\ref{thm:zigzag}.    Subfigure~\ref{Hep(L-1)} shows a $6-$nodes ornament (when $L=7$) with weights $\pm1, \pm2, \pm3, \pm4, \pm5, \pm6$; that means, the respective nodes have labels $1,2,3,4,5,6$. 
In Subfigure~\ref{Oct(L-2)} we see a $6-$nodes ornament (when $L=8$); the respective nodes  have the following labels: $1, 2, 3, 5, 6, 7$. 

\begin{remark}
  Under the assumption of employing a repetitive, consecutive labeling pattern  \eqref{repetPatt} in  Theorem~\ref{thm:zigzag}, the   number of nodes  can be at most $s = L - 1$,  as the maximum possible absolute difference between any two elements in a set of $L$ consecutive integers is $L - 1$. 
  We were only able to achieve the maximal number of nodes $s=L-1$ in the case when $L$ is odd.
     \end{remark} 
\section{Proofs}~\label{proofssection}
The proof of every result from Section~\ref{sectionthms} makes use of a weighted sum (Definition~\ref{def:whtsumN}) to create a larger, connected  DDMOG by introducing new edges between some vertices (nodes) of an initial DDMOG $\ar{G_0}$ and the vertices of an ornament, composed of four copies of an oriented $L-$cycle.  In every proof we state a specific new labeling function $f$ on the resulting graph such that the weight at every vertex is zero. 
     In addition, while the labels assigned to the vertices of the four cycles  vary from one proof to the other, in all of the proofs we preserve a pairing in terms of {orientation}:   
  $ \ar{H_1}$ and $ \ar{H_4}$  will share an orientation; for instance,   the oriented edges would  be $v_{i}^{(j)} \rightarrow v_{i +1}^{(j)}$ for $j \in \{0,3\}$. Then,  $\ar{H_2}$ and $ \ar{H_3}$  will have the opposite orientation; that is, the oriented edges will be $v_{i +1}^{(j)} \rightarrow v_{i }^{(j)}$ for $j \in \{1, 2\}$. Note that our constructions are not unique; in each proof, if  the orientations of all edges  are flipped, the proof will still hold true.

  Let $\ar{H_1}$ and $\ar{H_4}$ share an   orientation, while  $\ar{H_2}$ and $\ar{H_3}$ share the opposite orientation. 
    Because of the change in the cycle orientation, if for some $i \in \{1,2, \hdots,  L\}$ the initial weight $\text{wt}_h$ at vertices $v_{i}^{(0)} \in V(H_1)$ and $v_{i}^{(3)} \in V(H_4)$ is $q>0$, then at its copies in $ \ar{H_2}$ and $ \ar{H_3}$, the initial weight value  will be $-q$.
        Now, suppose a node $u_{q }$ in the initial DDMOG $\ar{G_0}$ has label $f_0(u_{q})=q$. Introducing new edges from (or to)  $u_{q}$ as needed will change the weight $\text{wt}_f$ at the respective vertex  in $V(H)$ to zero. With that, the weight at  $u_{q}$  remains zero because the additional incoming and outgoing flux due to the new edges to or from $\{v_{i}^{(j)}\}_{j=0}^3$ will balance each other out:
      $(n+l(i)) + (n+l(i)+3L) =  (n+l(i)+L) + (n+l(i)+2L)$. Thus, the weight at every vertex of this new graph is zero, i.e. this  new graph is a DDMOG.
\begin{proof}[Proof of Theorem~\ref{thms}] 

 We present the proof of the theorem for each value of $s \in \{2,3,4,5,6\}$. In each case we work with a DDMOG $\ar{G_0}$, which is an oriented graph with $n $ vertices (where $n$ has some constrains), paired with a DDM labeling bijection $f_0:V(G_0)\rightarrow \{1,...,n\}$. We  work with  four copies  of a $L-$cycle $H_j $, $1\leq j \leq 4$, whose vertices will need to be labeled differently in each case.  
 
 {\texttt{Case s=2:}} 
 Let $L\in \{3,5,6,7,\hdots\}$, and let $n\ge L-2$. Let 
    $H_j=C_L$, $1\leq j\leq 4$, with vertex sets $V(H_{j+1}) = \{ v_i^{(j-1)}\}_{i=1}^L$.
       We define an orientation on $H=\cup_{j=1}^4 H_j$ by specifying the  edges sets of  the oriented cycles $\ar{H_i}$ as follows:  for $j \in \{0, 3\}$, let    $E(\ar{H}_{j+1})=\{ v_i ^{(j)} v_{i+1}^{(j)}  \}_{i=1}^{L-1} \cup \{ v_L ^{(j)}v_{1}^{(j)}\}$, and for $j \in \{1,2\}$, let 
 
  \noindent  $E(\ar{H}_{j+1})=\{ v_{i+1}^{(j)}   v_{i}^{(j)}   \}_{i=1}^{L-1} \cup \{ v_{1}^{(j)}  v_{L}^{(j)} \}$. 
    
    
Let    $h: V(H) \rightarrow \{n+1, \hdots, n+4L\}$ be the bijection defined by  $h(v_i^{(j)})=n+jL+i$,  where  $1\leq i\leq L$, $0 \leq j \leq 3$.  
   Observe that    $\text{wt}_h(v)$   has values $\pm2$ or $\pm(L-2)$ for any $v$ in $V(H)$ (Figure~\ref{4kL} shows the weight values    for any $L\neq 4$).

   Let $f:V({G_0}) \cup V({H})\rightarrow\{1,...,n,...,n+4L\}$ be defined as $f|_{V(G_0)} \equiv f_0$, $f|_{V(H)} \equiv h$;
    %
  it is easy to see that $f$ is a bijection. 
   Now, let $u_2, \, u_{L-2} \in V(G_0)$ be such that $f_0(u_2)=2$ and $f_0(u_{L-2})=L-2$. 
We introduce new oriented edges to or from $u_p$, $p \in \{2, L-2\}$, and collect them  in set $E'$   as follows: 
  $vu_p  \in E'$ if $\text{wt}_h(v) = p$; but  
{if} $\text{wt}_h(v) = -p$,  {then} $u_pv \in E'.$
  With that, we have defined  $E(\ar{G_0}\oplus_{\text{wt}_h} \ar{H}) = E(G_0) \cup  E(H) \cup E'$. 
    Note that due to the  newly added set $E'$, the weight $\text{wt}_f$ of every $v \in V ( {H})$ is $0$.
For any vertex $u \in V({G_0}) \setminus \{u_2, u_{L-2}\}$, we have $\text{wt}_f(u) = \text{wt}_{f_0}(u)=0$. We now  confirm that the weight $\text{wt}_f$ at the vertices $u_2$ and $u_{L-2}$  is zero:

\begin{eqnarray*}
                    \text{wt}_f(u_2)&=& \text{wt}_{f_0}(u_2)+\sum_{x=n+2}^{n+L-1}x-\sum_{x=n+L+2}^{n+2L-1}x-\sum_{x=n+2L+2}^{n+3L-1}x+\sum_{x=n+3L+2}^{n+4L-1}x \\
        &=& 0+ 0.5((L-2)(n+L-1+n+2))\\ & &-0.5((L-2)(n+2L-1+n+L+2))\\& & -0.5(L-2)(n+3L-1+n+2L+2))\\
        & & +0.5((L-2)(n+4L-1+n+3L+2))\\
        &=&0.5{(8L^2+4Ln-14L-8n-4-(8L^2+4Ln-14L-8n-4))}
        =0, \\ \text{wt}_f(u_{L-2}) &=&   (n+1)+(n+L)+(n+3L+1)+(n+4L) \\
      & & -[(n+L+1)+(n+2L)+(n+2L+1)+(n+3L)] \\
      &=& 0+4n+8L+2-(4n+8L+2)=0.
      \end{eqnarray*}
Therefore, for every vertex $v$ in the new graph $\ar{G} = \ar{G_0}\oplus_{\text{wt}_h}  \ar{H}$ we have $\text{wt}_f(v)=0$. By definition,    $\ar{G}$ is DDMOG, and $\ar{H}=\cup_{j=1}^4\ar{H_j}$ is a $2-$nodes ornament.

{\texttt{Case s=3:}} 
    Let $L\geq 6$ be an even integer, and let $u_p\in E(G_0)$, $p \in \{4, L-5, L-3\}$ be such that $f_0(u_p)=p$. 
     Let   $V({H_{j+1}})=\{v_{i}^{(j)} \}_{i=1}^L$, $0\leq j \leq 3$; we define an orientation on $H=\cup_{i=j}^4 H_j$ by choosing the oriented edges sets  in the following way:  
     
    \noindent   $E(\ar{H_j}) =\{v_{i}^{(j)}v_{i+1}^{(j)} \}_{i=1}^{L-1} \cup \{v_{L}^{(j)}v_{1}^{(j)}\}$, for $j \in \{0, 3\}$, and   
     
 \noindent   $E(\ar{H_i}) =\{v_{i+1}^{(j)} v_{i}^{(j)} \}_{i=1}^{L-1} \cup    \{v_{1}^{(j)}v_{L}^{(j)}\}$, for $j \in \{1, 2\}$.

 \noindent In addition,   we assign labels to the vertices in $V({H})$ as follows:
     \begin{equation}\label{Hoddsthenevens}
h(v_{i}^{(j)}) =
\begin{cases}
n+ jL+ 2i-1, & \text{if } 1\leq i \leq L/2, \;\; 0\leq j\leq 3; \\\\
n+jL+2i, & \text{if } L/2 < i \leq L,  \;\; 0\leq j \leq 3.
\end{cases}
\end{equation}
\begin{center}
\begin{tikzpicture} [scale=0.94, node distance=1.4cm, base node/.style={circle,draw,minimum size=36pt}]
 
\node[base node, very thick, label=  {[text=red]left:$L - 2$}] (1) at (-9, -1.5) {$v_{1}^{(0)}$};
\node[base node, very thick, label=  {[text=red]210:-2}] (2) at (-6.25, -1) {$v_{2}^{(0)}$};
\node[base node, very thick, label=  {[text=red]210:-2}] (3) at (-3.5, -1) {$v_{3}^{(0)}$};
\node[base node, very thick, label=  {[text=red]right:-2}] (4) at (-0.75, -1.5) {$v_{4}^{(0)}$};

\node[base node, very thick, label=  {[text=red]left:$L - 2$}] (5) at (-9, -3.5) {$v_{L}^{(0)}$};
\node[base node, very thick, label=  {[text=red]150:-2}] (6) at (-6.25, -4) {$v_{_{L-1}}^{(0)}$};
\node[base node, very thick, label=  {[text=red]135:-2}] (7) at (-3.5, -4) {$v_{_{L-2}}^{(0)}$};
\node[base node, very thick, label=  {[text=red]right:-2}] (8) at (-0.75, -3.5) {$v_{_{L-3}}^{(0)}$};

\node[draw=none, very thick] (9) at (0.75,-2.5) {$\vdots$};

\path[->, very thick] (1) edge (2);
\path[->, very thick] (8) edge (7);
\path[->, very thick] (2) edge (3);
\path[->, very thick] (7) edge (6);
\path[->, very thick] (3) edge (4);
\path[->, very thick] (6) edge (5);
\path[->, very thick] (5) edge (1);
\path[->, very thick] (4) edge (9);
\path[->, very thick] (9) edge (8);


\node[base node, very thick, label=  {[text=red]left:$-(L - 2)$}] (10) at (-9, -6.5) {$v_1^{(1)}$};
\node[base node, very thick, label=  {[text=red]240:-2}] (11) at (-6.25, -6) {$v_2^{(1)}$};
\node[base node, very thick, label=  {[text=red]240:-2}] (12) at (-3.5, -6) {$v_3^{(1)}$};
\node[base node, very thick, label=  {[text=red]right:-2}] (13) at (-0.75, -6.5) {$v_4^{(1)}$};

\node[base node, very thick, label=  {[text=red]left:$-(L - 2)$}] (14) at (-9, -8.5) {$v_L^{(1)}$};
\node[base node, very thick, label=  {[text=red]150:-2}] (15) at (-6.25, -9) {$v_{_{L-1}}^{(1)}$};
\node[base node, very thick, label=  {[text=red]135:-2}] (16) at (-3.5, -9) {$v_{_{L-2}}^{(1)}$};
\node[base node, very thick, label=  {[text=red]right:-2}] (17) at (-0.75, -8.5) {$v_{_{L-3}}^{(1)}$};

\node[draw=none, very thick] (18) at (0.75,-7.5) {$\vdots$};

\path[<-, very thick] (10) edge (11);
\path[<-, very thick] (17) edge (16);
\path[<-, very thick] (11) edge (12);
\path[<-, very thick] (16) edge (15);
\path[<-, very thick] (12) edge (13);
\path[<-, very thick] (15) edge (14);
\path[<-, very thick] (14) edge (10);
\path[<-, very thick] (13) edge (18);
\path[<-, very thick] (18) edge (17);


\node[base node, very thick, label=  {[text=red]left:$-(L - 2)$}] (19) at (-9, -11.25) {$v_1^{(2)}$};
\node[base node, very thick, label=  {[text=red]240:-2}] (20) at (-6.25, -11) {$v_2^{(2)}$};
\node[base node, very thick, label=  {[text=red]240:-2}] (21) at (-3.5, -11) {$v_3^{(2)}$};
\node[base node, very thick, label=  {[text=red]right:-2}] (22) at (-0.75, -11.5) {$v_4^{(2)}$};

\node[base node, very thick, label=  {[text=red]left:$-(L - 2)$}] (23) at (-9, -13.75) {$v_{_{L}}^{(2)}$};
\node[base node, very thick, label=  {[text=red]150:-2}] (24) at (-6.25, -14) {$v_{_{L-1}}^{(2)}$};
\node[base node, very thick, label=  {[text=red]135:-2}] (25) at (-3.5, -14) {$v_{_{L-2}}^{(2)}$};
\node[base node, very thick, label=  {[text=red]right:-2}] (26) at (-0.75, -13.5) {$v_{_{L-3}}^{(2)}$};

\node[draw=none, very thick] (27) at (0.75,-12.5) {$\vdots$};

\path[<-, very thick] (19) edge (20);
\path[<-, very thick] (26) edge (25);
\path[<-, very thick] (20) edge (21);
\path[<-, very thick] (25) edge (24);
\path[<-, very thick] (21) edge (22);
\path[<-, very thick] (24) edge (23);
\path[<-, very thick] (23) edge (19);
\path[<-, very thick] (22) edge (27);
\path[<-, very thick] (27) edge (26);


\node[base node, very thick, label=  {[text=red]left:$L - 2$}] (28) at (-9, -16.25) {$v_1^{(3)}$};
\node[base node, very thick, label=  {[text=red]240:-2}] (29) at (-6.25, -16) {$v_2^{(3)}$};
\node[base node, very thick, label=  {[text=red]240:-2}] (30) at (-3.5, -16) {$v_3^{(3)}$};
\node[base node, very thick, label=  {[text=red]right:-2}] (31) at (-0.75, -16.5) {$v_4^{(3)}$}; 

\node[base node, very thick, label=  {[text=red]left:$L - 2$}] (32) at (-9, -18.75) {$v_{_{L}}^{(3)}$};
\node[base node, very thick, label=  {[text=red]150:-2}] (33) at (-6.25, -19) {$v_{_{L-1}}^{(3)}$};
\node[base node, very thick, label=  {[text=red]135:-2}] (34) at (-3.5, -19) {$v_{_{L-2}}^{(3)}$};
\node[base node, very thick, label=  {[text=red]right:-2}] (35) at (-0.75, -18.5) {$v_{_{L-3}}^{(3)}$};

\node[draw=none, very thick] (36) at (0.75,-17.5) {$\vdots$};

\path[->, very thick] (28) edge (29);
\path[->, very thick] (35) edge (34);
\path[->, very thick] (29) edge (30);
\path[->, very thick] (34) edge (33);
\path[->, very thick] (30) edge (31);
\path[->, very thick] (33) edge (32);
\path[->, very thick] (32) edge (28);
\path[->, very thick] (31) edge (36);
\path[->, very thick] (36) edge (35);

\end{tikzpicture}
\captionof{figure}{The labeling of $4\ar{C_L}$ used in the proof of   Theorem \ref{thms} for the case $s=2$, $L\neq 4$: $h(v_i ^{(j)})=n+jL+i$, where $1\leq i \leq L$, $0\leq j \leq 3$. The values of $\text{wt}_h$ are shown in red, indicating the nodes satisfy $f_0(u_p)=p$, for $p \in \{2,L-2\}$. }
\label{4kL}
   
 \end{center}

\begin{figure}
\centering
\begin{tikzpicture}
[scale=0.94, font=\scriptsize, node distance=1.4cm, base node/.style={circle,draw,minimum size=36pt}]
 
\node[base node, very thick, label={[text=red]above left:+(L-3)}](n+1) at (-9, 3.6) {$v_1^{(0)}$};
\node[base node, very thick, label={[text=red]above right:-4}](n+3) at (-6, 3.6) {$v_3^{(0)}$};
\node[ very thick ] (dd11) at (-3, 3.6) {...};
\node[base node, very thick, label={[text=red]above right:-4}] (n+L-3) at (0, 3.6) {$v_{L-3}^{(0)}$};
\node[base node, very thick, label={[text=red]above right:+(L-5)}] (n+L-1) at (3, 3.6) {$v_{L-1}^{(0)}$};
\node[base node, very thick, label={[text=red]right:+(L-5)}](n+2) at (3, 1) {$v_{2}^{(0)}$};
\node[base node, very thick, label={[text=red]above right:-4}](n+4) at (0, 1) {$v_{4}^{(0)}$};
\node[ very thick ] (dd12) at (-3, 1) {...};
 
\node[base node, very thick, label={[text=red]above right:-4}] (n+L-2) at (-6, 1) {$v_{L-2}^{(0)}$};

\node[base node, very thick, label={[text=red]above left: +(L-3)}] (n+L) at (-9, 1) {$v_{L}^{(0)}$};

\path[->, very thick] (n+1) edge (n+3);
\path[->, very thick] (n+3)  edge (dd11);
\path[->, very thick] (dd11) edge   (n+L-3);
\path[->, very thick] (n+L-3) edge (n+L-1);
\path[->, very thick] (n+L-1) edge (n+2);
\path[->, very thick] (n+2) edge (n+4); 
\path[->, very thick] (n+4)   edge (dd12);
\path[->, very thick] (dd12) edge (n+L-2);
\path[->, very thick] (n+L-2) edge (n+L);
\path[->, very thick] (n+L) edge (n+1);

\node[base node, very thick, label={[text=red]above left:-(L-3)}](n+L+1) at (-9, -2) {$v_{1}^{(1)}$};

\node[base node, very thick, label={[text=red]above right:+4}](n+L+3) at (-6, -2) { $v_{3}^{(1)}$};

\node[ very thick ](dd21) at (-3, -2) {...}; 
\node[base node, very thick, label={[text=red]above right:+4}] (n+2L-3) at (0, -2) {$v_{L-3}^{(1)}$};

\node[base node, very thick, label={[text=red]above right:-(L-5)}](n+2L-1) at (3, -2) {$v_{L-1}^{(1)}$};

\node[base node, very thick, label={[text=red]above right:-(L-5)}](n+L+2) at (3, -4.6) {$v_{2}^{(1)}$}; 
\node[base node, very thick, label={[text=red]above right:+4}](n+L+4) at (0, -4.6) {$v_{4}^{(1)}$};
 
\node[ very thick ](dd22) at (-3, -4.6) {...};  
 
\node[base node, very thick, label={[text=red]above right:+4}] (n+2L-2) at (-6, -4.6) {$v_{L-2}^{(1)}$};
\node[base node, very thick, label={[text=red]above left:-(L-3)}](n+2L) at (-9, -4.6) {$v_{L}^{(1)}$}; 

\path[->, very thick] (n+L+1) edge (n+2L);
\path[->, very thick] (n+2L) edge (n+2L-2);
\path[->, very thick] (n+2L-2)  edge (dd22);
\path[->, very thick] (dd22) edge   (n+L+4);
\path[->, very thick] (n+L+4) edge (n+L+2);
\path[->, very thick] (n+L+2) edge (n+2L-1);
\path[->, very thick] (n+2L-1) edge (n+2L-3);
\path[->, very thick] (n+2L-3) edge (dd21);
\path[->, very thick] (dd21) edge   (n+L+3);
\path[->, very thick] (n+L+3) edge (n+L+1);
 
\node[base node, very thick, label={[text=red]above left:-(L-3)}](n+2L+1) at (-9, -7.6) {$v_{1}^{(2)}$};
\node[base node, very thick, label={[text=red]above right:+4}](n+2L+3) at (-6, -7.6) {$v_{3}^{(2)}$};
\node[ very thick ](dd31) at (-3, -7.6) {...};
\node[base node, very thick, label={[text=red]above right:+4}] (n+3L-3) at (0, -7.6) { $v_{L-3}^{(2)}$};
\node[base node, very thick, label={[text=red]above right:-(L-5)}](n+3L-1) at (3, -7.6) {$v_{L-1}^{(2)}$};
\node[base node, very thick, label={[text=red]above right:-(L-5)}](n+2L+2) at (3, -10.2) {$v_{2}^{(2)}$};
\node[base node, very thick, label={[text=red]above right:+4}](n+2L+4) at (0, -10.2) {$v_{4}^{(2)}$};
\node[ very thick ](dd32) at (-3, -10.2) {...};
\node[base node, very thick, label={[text=red]above right:+4}] (n+3L-2) at (-6, -10.2) {$v_{L-2}^{(2)}$};
\node[base node, very thick, label={[text=red]above left:-(L-3)}](n+3L) at (-9, -10.2) {$v_{L}^{(2)}$};

\path[->, very thick] (n+2L+1) edge (n+3L);
\path[->, very thick] (n+3L) edge (n+3L-2);
\path[->, very thick] (n+3L-2)   edge (dd32);
\path[->, very thick] (dd32) edge  (n+2L+4);
\path[->, very thick] (n+2L+4) edge (n+2L+2);
\path[->, very thick] (n+2L+2) edge (n+3L-1);
\path[->, very thick] (n+3L-1) edge (n+3L-3);
\path[->, very thick] (n+3L-3)  edge (dd31);
\path[->, very thick] (dd31)   edge (n+2L+3);
\path[->, very thick] (n+2L+3) edge (n+2L+1);

\node[base node, very thick, label={[text=red]above left:+(L-3)}](n+3L+1) at (-9, -13.2) {$v_{1}^{(3)}$};
\node[base node, very thick, label={[text=red]above right:-4}](n+3L+3) at (-6, -13.2) {$v_{3}^{(3)}$};
\node[ very thick ] (dd41) at (-3, -13.2) {...};
\node[base node, very thick, label={[text=red]above right:-4}] (n+4L-3) at (0, -13.2) {$v_{L-3}^{(3)}$};
\node[base node, very thick, label={[text=red]above right:+(L-5)}](n+4L-1) at (3, -13.2) {$v_{L-1}^{(3)}$};
\node[base node, very thick, label={[text=red]above right:+(L-5)}](n+3L+2) at (3, -15.8) {$v_{2}^{(3)}$};
\node[base node, very thick, label={[text=red]above right:-4}](n+3L+4) at (0, -15.8) {$v_{4}^{(3)}$}; 
\node[ very thick ] (dd42) at (-3, -15.8) {...};
\node[base node, very thick, label={[text=red]above right: -4}] (n+4L-2) at (-6,-15.8) {$v_{L-2}^{(3)}$};
\node[base node, very thick, label={[text=red]above left:+(L-3)}](n+4L) at (-9, -15.8) {$v_{L}^{(3)}$};

\path[->, very thick] (n+3L+1) edge (n+3L+3);
\path[->, very thick] (n+3L+3)   edge (dd41);
\path[->, very thick] (dd41) edge   (n+4L-3);
\path[->, very thick] (n+4L-3) edge (n+4L-1);
\path[->, very thick] (n+4L-1) edge (n+3L+2);
\path[->, very thick] (n+3L+2) edge (n+3L+4);
\path[->, very thick] (n+3L+4)  edge (dd42);
\path[->, very thick] (dd42)  edge (n+4L-2);
\path[->, very thick] (n+4L-2) edge (n+4L);
\path[->, very thick] (n+4L) edge (n+3L+1);

\end{tikzpicture}
\caption{$3-$nodes ornament (4$\ar{C_L}$). The labeling used in the proof   of Theorem \ref{thms}, case $s=3$, is specified  in equation \eqref{Hoddsthenevens}. The values of $\text{wt}_h$ are shown in red, indicating the nodes $u_4$, $u_{L-5}$ and $u_{L-3}$ must satisfy $f_0(u_p)=p$, for $p \in \{4, L-5,L-3\}$. }
\label{3nodeweights}
\end{figure}

   The labeling \eqref{Hoddsthenevens} induces a weight $w_h: V(H) \rightarrow \{n+1, \hdots, n+4L\}$ which has values $\pm 4$, $\pm (L-3)$ and $\pm (L-5)$, as shown in Figure~\ref{3nodeweights}.  
It is easy to verify that the weights on the vertices of $ {H_1}$ and  $ {H_4}$ are either $-4$, $L-5$ or $L-3$ when  $L\geq 6$. Similarly, the weights on the vertices of $ {H_2}$ and  ${H_3}$ are either $4$, $-(L-5)$ or $-(L-3)$.

We now define a set of additional edges $E'$:
Let $V_u=\{u_p \, | \, p \in \{4,L-3, L-5 \}\subset V(G_0)$ such that $f_0(u_p)=p$, $ p \in \{4,L-3, L-5 \}$. For any $u_p \in V_u$ and any $v \in V(H)$, we choose:
$vu_p \in E'$ if $\text{wt}_h(v)= p$;  similarly, $u_p v \in E'$ if   $\text{wt}_h(v)= -p$.
  Next, let $\ar{G}:=\ar{G_0} \oplus_{h}   \ar{H}$, where $V({G})=V({G_0}) \cup   V( {H})$ and  $E(\ar{G}):=E(\ar{G_0}) \cup E(\cup_{j=1}^4 \ar{H_j}) \cup E'$. We will now verify by definition that $\ar{G}$ is a DDMOG:
 
 Let $f$ be a labeling on $\ar{G}$, defined by $f (u) =  f_0(u)$ for all $u \in V({G_0})$, and $f(v)=h(v)$ for all $v \in \cup_{j=1}^4 {H_j}$. Obviously, $f: V(\ar{G}) \rightarrow \{1,2, \hdots, n+4L\}$ is a bijection, and for the weight function $\text{wt}_f$ it induces, it holds: $\text{wt}_f(v) = 0$ for all $v \in V(H)$, due to the additionally defined edges  edges packed in $E'$. Also, $\text{wt}_f(u)=\text{wt}_{f_0}(u)=0$ for all $u \in V(G_0)\setminus V_u$. Finally, 
\begin{eqnarray*}
    \text{wt}_f(u_{L-5} ) &=& (n+L-1)+(n+2) - (n+2L-1)-(n+L+2) \\
   & & +(n+4L -1)+(n+3L+2) - (n+3L-1)-(n+2L+2)  =0,
   \\
   \text{wt}_f(u_{L-3} ) &=& (n+1)+(n+L) - (n+L+1)-(n+2L) 
   \\
   & & - (n+3L-1)-(n+3L) +(n+3L +1)+(n+4L) =0, \\
      \text{wt}_f (u_4) &=& -\sum_{i \in \{1,4\}} \left(\sum_{j=1} ^{L/2 -1} (n+(i-1)L+2j-1) +\sum_{j=1} ^{L/2 -1} (n+(i-1)L+2j) \right)
 \\
& &   + \sum_{i=2}^3 \left(\sum_{j=1} ^{L/2 -1} (n+(i-1)L+ 2j-1) +\sum_{j=1} ^{L/2 -1} (n+(i-1)L+2j) \right),   \end{eqnarray*}thus $\text{wt}_f (u_4) =0$. By definition,  $f$ is a DDM labeling for $\ar{G}$, and  $\ar{H}$ is a $3-$nodes ornament. 

{\texttt{Case s=4:}} 
Let $L\geq 7$. We denote four $L-$cycles by ${H_j}$, $1\leq j\leq 4$,  with vertex sets $V(H_j)=\{v_i^{(j)} \; | \; 1 
 \leq i \leq  L\}$.  
 We set an orientation on each $H_j$ by defining the edge sets as follows:
 For each $j\in \{0,3\}$, let 
 
\vspace{1.3mm}
 
\noindent $\displaystyle E(\ar{H}_{j+1})  = \{ v_L^{(j)} v_1^{(j)}, v_1^{(j)} v_2^{(j)}, v_2^{(j)} v_4^{(j)}, v_4^{(j)} v_3^{(j)}, v_3^{(j)} v_5^{(j)} \}\cup \{ v_i^{(j)} v_{i+1}^{(j)} \}_{i=5}^{L-1}$; 

\noindent for $j\in \{1, 2\}$, let
 
\noindent   $\displaystyle E(\ar{H}_{j+1})  = \{   v_1^{(j)}v_L^{(j)},   v_2^{(j)}v_1^{(j)}, v_4^{(j)}v_2^{(j)},  v_3^{(j)}v_4^{(j)},   v_5^{(j)}v_3^{(j)} \}\cup \{   v_{i+1}^{(j)}v_i^{(j)} \}_{i=5}^{L-1}$.

\hspace{1mm}

 \noindent Let $H=\cup_{j=1}^4 {H_j}$. We introduce a labeling on $V(H)$: For all $j \in \{1,2,3,4\}$,  let $h(v_i^{(j)})=n+jL+i$,   $1\leq i \leq L$. An illustration of the defined labeling and orientations is seen in  Figure~\ref{4nodeAttachGen}; observe  that the values of the weight $\text{wt}_h$ are $\pm 1, \pm 2, \pm 3$ or $\pm(L-2)$. It is easy to see that if  $L\geq 6$, then there will be exactly four distinct weight values  of $\text{wt}_h$. 


Recall,  we assume that $\ar{G_0}$ is a DDMOG with $n$ vertices, and associated DDM labeling $f_0$.  In addition, we assume that $n\geq L-2$ as per Table~\ref{s-table}. Let  $f_0(u_1)=1$, $f_0(u_2)=2$, $f_0(u_3)=3$ and $f_0(u_{L-2})=L-2$, where $V_u=\{u_1, u_2, u_3, u_{L-2} \}$ $\subset V({G_0})$. We now introduce additional edges to or from  the vertices in $V_u$ and any vertex $v$ in $  V({H})$  as follows: Given  $p \in \{ 1, 2, 3, L-2\} $, let 
    $vu_{p} \in E'$ if $ \text{wt}_h{(v)}=p$; let $u_{p}v \in E'$ if $ \text{wt}_h{(v)}=-p$; otherwise no new edge is created. 

\begin{figure}
\centering
\begin{tikzpicture}
[scale=0.94, node distance=1.4cm, font=\scriptsize, 
        align=center, base node/.style={circle,draw,minimum size=36pt}]

\node[base node, very thick, label={[text=red]left:+(L-2)}](n+1) at (-8, 2.6) {$v_{1}^{(0)}$};
\node[base node, very thick, label={[text=red] above right:-3}] (n+2) at (-5.5, 2.6) { $v_{2}^{(0)}$};
\node[base node, very thick, label={[text=red]above right:-1}] (n+4) at (-3, 2.6) { $v_{4}^{(0)}$};
\node[base node, very thick, label={[text=red]above right:-1}] (n+3) at (-.5, 2.6) {$v_{3}^{(0)}$};
\node[base node, very thick, label={[text=red]above right:-3}] (n+5) at (2, 2.6) {$v_{5}^{(0)}$}; 
\node[base node, very thick, label={[text=red] above right:-2}](n+6) at (2, 0) {$v_{6}^{(0)}$};
\node[base node, very thick, label={[text=red] above right:-2}] (n+7) at (-.5, 0) {$v_{7}^{(0)}$};
\node[base node, very thick, label={[text=red] left:+(L-2)}] (n+L) at (-8, 0) {$v_{L}^{(0)}$};
\node[base node, very thick, label={[text=red] above right:-2}] (n+L-1) at (-5.5, 0) {$v_{L-1}^{(0)}$};
\node[ very thick] (dd1) at (-3, 0) {...};

\path[->, very thick] (n+1) edge (n+2);
\path[->, very thick] (n+2) edge (n+4);
\path[->, very thick] (n+4) edge (n+3);
\path[->, very thick] (n+3) edge (n+5);
\path[->, very thick] (n+5) edge (n+6);
\path[->, very thick] (n+6) edge (n+7);
\path[->, very thick] (n+7) edge (dd1);
\path[->, very thick] (dd1) edge (n+L-1);
\path[->, very thick] (n+L-1) edge (n+L);
\path[->, very thick] (n+L) edge (n+1);

\node[base node, very thick, label={[text=red]left:-(L-2)}](n+L+1) at (-8, -2.6) {$v_{1}^{(1)}$};
\node[base node, very thick, label={[text=red] above right:+3}] (n+L+2) at (-5.5, -2.6) {$v_{2}^{(1)}$};
\node[base node, very thick, label={[text=red]above right:+1}] (n+L+4) at (-3, -2.6) {$v_{4}^{(1)}$};
\node[base node, very thick, label={[text=red]above right:+1}] (n+L+3) at (-.5, -2.6) {$v_{3}^{(1)}$};
\node[base node, very thick, label={[text=red]above right:+3}] (n+L+5) at (2, -2.6) {$v_{5}^{(1)}$}; 
\node[base node, very thick, label={[text=red] above right:+2}](n+L+6) at (2, -5.2) {$v_{6}^{(1)}$};
\node[base node, very thick, label={[text=red] above right:+2}](n+L+7) at (-.5, -5.2) {$v_{7}^{(1)}$};
\node[base node, very thick, label={[text=red] left:-(L-2)}] (n+2L) at (-8, -5.2) {$v_{L}^{(1)}$};
\node[base node, very thick, label={[text=red] above right:+2}] (n+2L-1) at (-5.5, -5.2) {$v_{L-1}^{(1)}$};
\node[ very thick] (dd12) at (-3, -5.2) {...};

\path[->, very thick] (n+L+1) edge (n+2L);
\path[->, very thick] (n+2L) edge (n+2L-1);
\path[->, very thick] (n+2L-1) edge (dd12);
\path[->, very thick] (dd12) edge (n+L+7);
\path[->, very thick] (n+L+7) edge (n+L+6);
\path[->, very thick] (n+L+6) edge (n+L+5);
\path[->, very thick] (n+L+5) edge (n+L+3);
\path[->, very thick] (n+L+3) edge (n+L+4);
\path[->, very thick] (n+L+4) edge (n+L+2);
\path[->, very thick] (n+L+2) edge (n+L+1);
 
\node[base node, very thick, label={[text=red]left:-(L-2)}](n+2L+1) at (-8, -7.8) {$v_{1}^{(2)}$};
\node[base node, very thick, label={[text=red] above right:+3}] (n+2L+2) at (-5.5, -7.8) { $v_{2}^{(2)}$};
\node[base node, very thick, label={[text=red]above right:+1}] (n+2L+4) at (-3, -7.8) {$v_{4}^{(2)}$};
\node[base node, very thick, label={[text=red]above right:+1}] (n+2L+3) at (-.5, -7.8) { $v_{3}^{(2)}$};
\node[base node, very thick, label={[text=red]above right:+3}] (n+2L+5) at (2, -7.8) { $v_{5}^{(2)}$}; 
\node[base node, very thick, label={[text=red] above right:+2}](n+2L+6) at (2, -10.4) {$v_{6}^{(2)}$}; 
\node[base node, very thick, label={[text=red] above right:+2}](n+2L+7) at (-.5, -10.4) { $v_{7}^{(2)}$};
\node[base node, very thick, label={[text=red] left:-(L-2)}] (n+3L) at (-8, -10.4) { $v_{L}^{(2)}$};
\node[base node, very thick, label={[text=red] above right:+2}] (n+3L-1) at (-5.5, -10.4) { $v_{L-1}^{(2)}$};
\node[ very thick] (dd13) at (-3, -10.4) {...};

\path[->, very thick] (n+2L+1) edge (n+3L);
\path[->, very thick] (n+3L) edge (n+3L-1);
\path[->, very thick] (n+3L-1) edge (dd13);
\path[->, very thick] (dd13) edge (n+2L+7);
\path[->, very thick] (n+2L+7) edge (n+2L+6);
\path[->, very thick] (n+2L+6) edge (n+2L+5);
\path[->, very thick] (n+2L+5) edge (n+2L+3);
\path[->, very thick] (n+2L+3) edge (n+2L+4);
\path[->, very thick] (n+2L+4) edge (n+2L+2);
\path[->, very thick] (n+2L+2) edge (n+2L+1);

\node[base node, very thick, label={[text=red]left:+(L-2)}](n+3L+1) at (-8, -13) {$v_{1}^{(3)}$};
\node[base node, very thick, label={[text=red] above right:-3}] (n+3L+2) at (-5.5, -13) { $v_{2}^{(3)}$};
\node[base node, very thick, label={[text=red]above right:-1}] (n+3L+4) at (-3, -13) { $v_{4}^{(3)}$};
\node[base node, very thick, label={[text=red]above right:-1}] (n+3L+3) at (-.5, -13) {$v_{3}^{(3)}$};
\node[base node, very thick, label={[text=red]above right:-3}] (n+3L+5) at (2, -13) { $v_{5}^{(3)}$}; 
\node[base node, very thick, label={[text=red] above right:-2}](n+3L+6) at (2, -15.6) { $v_{6}^{(3)}$};  
\node[base node, very thick, label={[text=red] above right:-2}](n+3L+7) at (-.5, -15.6) { $v_{7}^{(3)}$};
\node[base node, very thick, label={[text=red] left:+(L-2)}] (n+4L) at (-8, -15.6) { $v_{L}^{(3)}$};
\node[base node, very thick, label={[text=red] above right:-2}] (n+4L-1) at (-5.5, -15.6) { $v_{L-1}^{(3)}$};
\node[ very thick] (dd14) at (-3, -15.6) {...};

\path[->, very thick] (n+3L+1) edge (n+3L+2);
\path[->, very thick] (n+3L+2) edge (n+3L+4);
\path[->, very thick] (n+3L+4) edge (n+3L+3);
\path[->, very thick] (n+3L+3) edge (n+3L+5);
\path[->, very thick] (n+3L+5) edge (n+3L+6);
\path[->, very thick] (n+3L+6) edge (n+3L+7);
\path[->, very thick] (n+3L+7) edge (dd14);
\path[->, very thick] (dd14) edge (n+4L-1);
\path[->, very thick] (n+4L-1) edge (n+4L);
\path[->, very thick] (n+4L) edge (n+3L+1);

\end{tikzpicture}
\caption{$4-$nodes ornament ($4\ar{C_L}$), with the labeling pattern used in the proof of case $s=4$ of Theorem~\ref{thms}. The labeling is defined as $h(v_i^{(j)})=n+jL+i$,   $1\leq i \leq L$, $0\leq j \le 3$. The values of $\text{wt}_h$ are shown in red;  the nodes $u_1$, $u_2$, $u_3$ and $u_{L-2}$ satisfy $f_0(u_p)=p$, for $p \in \{1,2,3,L-2\}$.  }
\label{4nodeAttachGen}
\end{figure}\FloatBarrier

We  define a  bijection from $V({G_0}) \cup    V({H}) $ to $\{j\}_{j=1}^{n+4L}$ by $ f(v)=  
        f_0(v) \; \text{if } \; v \in V({G_0})$,   {otherwise,} $f(v)=     h(v)$.  
We  verify by definition that $f$ is indeed a DDM labeling for   $\ar{G}:=\ar{G_0}\oplus_h  \ar{H}$, with associated edges set $E(\ar{G_0})\cup E' \cup \left( \cup_{i=1}^4 E(\ar{H_i})\right)$, and that will complete the proof: First, observe that  $\text{wt}_f$ is zero at every vertex in $ V({H})$, due to the new edges collected in the set $E'$. Also, when $j \notin \{1,2,3,L-2\}$, at every vertex $u_j$ $\in V({G_0})$ we have $\text{wt}_f(u_j)=\text{wt}_{f_0}(u_j)=0$. Finally,
\begin{eqnarray*}
    \text{wt}_f (u_1)&=& \text{wt}_{f_0}(u_1) -(n+4)-(n+3)+(n+L+4)+(n+L+3)\\
  & &  + (n+2L+4)+(n+2L+3) (n+3L+4)-(n+3L+3)=0,
 \\
    \text{wt}_f(u_2) &=& \text{wt}_{f_0}(u_2)+(n+L+6)+(n+L+7)+ ... +(n+2L)-(n+6) \\ 
  & & -(n+7) -...-(n+L)  +(n+2L+6)+(n+2L+7)\\
  & & +...+(n+3L)-(n+3L+6)
  \\& & -(n+3L+7)-...-(n+4L) \\ & =& 0.5(L-6)((n+L+6+n+2L-1)+(n+2L+6+n+3L-1)\\ & &-(n+6+n+L-1)-(n+3L+6+n+4L-1))\\
   &  =& 0.5(L-6)(4n+8L+10-(4n+8L+10)
    = 0 \\
     \text{wt}_f(u_3)&=&\text{wt}_{f_0}(u_3)+(n+2)+(n+5)+(n+3L+2)+(n+3L+5)\\& &-(n+L+2)-(n+L+5)
    -(n+2L+2)-(n+2L+5) = 0,
 \\
    \text{wt}_f(u_{L-2})& =&\text{wt}_{f_0}(u_L-2)+(n+1)+(n+L)+(n+3L+1)+(n+4L)\\
   & & -(n+L+1)-(n+2L)-(n+2L+1)-(n+3L) = 0.
\end{eqnarray*} 
With that, we have confirmed that $\ar{G_0}\oplus_h \cup_{i=1}^4 \ar{H_i}$ is a DDMOG, thus $ \cup_{i=1}^4 \ar{H_i}$ is a $4-$nodes ornament. \qed

{\texttt{Case s=5:}} 
    Let $L\geq 7$ and let $\ar{G_0}$ be an oriented graph  with $n\ge L-2$ vertices, and associated DDM labeling $f_0$ on $V(G_0)$. 
    Suppose at any $u_{i} \in V(G_0)$ we have  $f_0(u_{i}) =i$,  $1\leq i \leq n$. Let  
    ${H_j}$, $1 \leq j \leq 4$ be four copies of an $L-$cycle, with vertex sets  
    $V(H_{j})=  \{v_i^{(j-1)} \}_{i=1}^L$, and let $H=\cup_{j=1}^4 H_j$.
   Next, we define the oriented edges sets as depicted in Figure~\ref{5nodeweights}: 
    for $j \in \{0,3\}$ we have 

    
     \noindent     $ \displaystyle E(\ar{H}_{j+1}) = \{ v_{L}^{(j)} v_{1}^{(j)},  v_{1}^{(j)} v_{3}^{(j)},  v_{3}^{(j)}v_{2}^{(j)},   v_{2}^{(j)}v_{4}^{(j)}\} \cup \{ v_{i}^{(j)} v_{i+1}^{(j)} \}_{i=4}^{ L-1}$,
     
     \noindent while for $j\in \{1,2\}$ we have 
       
    \noindent        $ \displaystyle E(\ar{H }_{j+1}) = \{ v_{L}^{(j)}v_1^{(j)},   v_{3}^{(j)}v_{1}^{(j)},  v_{2}^{(j)}v_{3}^{(j)},   v_{4}^{(j)}v_{2}^{(j)}\} \cup \{ v_{i+1}^{(j)} v_{i}^{(j)} \}_{i= 4}^{ L-1}$.

    
     We  work with the natural bijection $h: \cup_{j=1}^4 V(H_j) \rightarrow \{n+1, n+2, \hdots, n+4L\}$:
     \begin{equation}\label{hlabel5}
         \text{$h(v_i ^{(j)})= n+jL + i$ for  $1\leq i \leq L$, $0\leq j \leq 3$.}
     \end{equation} Depending on the orientation   in each cycle,  the following values for $\text{wt}_h$ will be associated with the vertices of $H$ for $0
     \leq j \leq 3$:   

      \noindent     
 $\text{wt}_h(v_{4}^{(j)})=\pm 3$; it is easy to see that at any other  $v \in V(H)$ it holds   $\text{wt}_h (v)=\pm 2$. Since the maximal value of the weight $\text{wt}_h$ is $\pm (L-2)$, we need $L-2 \leq n$. There will be five distinct nodes with labels $1$, $2$, $3$, $L-3$, and $L-2$ if in adition $L\geq 7$; 
  if $L< 7$, there will be four or less distinct weight values when using the labeling $h$ as defined in \eqref{hlabel5}. 

\begin{figure}[h]
\centering
\begin{tikzpicture}
 [scale=0.94, node distance=1.4cm, font=\scriptsize, 
        align=center, base node/.style={circle,draw,minimum size=36pt}]

\node[base node, very thick, label={[text=red]above left:+(L-3)}](31) at (-8, -13) {$v_{1}^{(3)}$};
  
\node[base node, very thick, label={[text=red] above right:-1}] (33) at (-5.5, -13) {$v_{3}^{(3)}$};
 
 \node[base node, very thick, label={[text=red]above right:-1}] (32) at (-3, -13) {$v_{2}^{(3)}$};

  \node[base node, very thick, label={[text=red]above right:-3}] (34) at (-.5, -13) {$v_{4}^{(3)}$};
 
  \node[base node, very thick, label={[text=red]above right:-2}] (35) at (2, -13) {$v_{5}^{(3)}$};

 \node[base node, very thick, label={[text=red]right:-2}](36) at (2, -15.5) {$v_{6}^{(3)}$};
  
\node[ very thick ] (37) at (-.5, -15.5) {...};
 
 \node[base node, very thick, label={[text=red] above right:-2}] (38) at (-3, -15.5) {$v_{L-2}^{(3)}$};

  \node[base node, very thick, label={[text=red]above right:-2}] (39) at (-5.5, -15.5) {$v_{L-1}^{(3)}$};

    \node[base node, very thick, label={[text=red]above left: +(L-2)}] (40) at (-8, -15.5) {$v_{L}^{(3)}$};

\path[->, very thick] (31) edge (33);
 
\path[->, very thick] (33) edge (32);
 
\path[->, very thick] (32) edge (34);

\path[->, very thick] (34) edge (35);
 
\path[->, very thick] (35) edge (36);

\path[->, very thick] (36) edge (37);

\path[->, very thick] (37) edge (38);

\path[->, very thick] (38) edge (39);
 
\path[->, very thick] (39) edge (40);

 \path[->, very thick] (40) edge (31);
 
\node[base node, very thick, label={[text=red]above left:+(L-3)}](1) at (-8, 3.5) {$v_{1}^{(0)}$};
\node[base node, very thick, label={[text=red] above right:-1}] (3) at (-5.5, 3.5) { $v_{3}^{(0)}$};
\node[base node, very thick, label={[text=red]above right:-1}] (2) at (-3, 3.5) {$v_{2}^{(0)}$};
\node[base node, very thick, label={[text=red]above right:-3}] (4) at (-.5, 3.5) {$v_{4}^{(0)}$};
\node[base node, very thick, label={[text=red]above right:-2}] (5) at (2, 3.5) {$v_{5}^{(0)}$};
\node[base node, very thick, label={[text=red]right:-2}](6) at (2, 1) {$v_{6}^{(0)}$};
\node[ very thick ] (7) at (-.5, 1) {...};
\node[base node, very thick, label={[text=red] above right:-2}] (8) at (-3, 1) {$v_{L-2}^{(0)}$};
\node[base node, very thick, label={[text=red]above right:-2}] (9) at (-5.5, 1) {$v_{L-1}^{(0)}$};
\node[base node, very thick, label={[text=red]above left: +(L-2)}] (10) at (-8, 1) {$v_{L}^{(0)}$};

\path[->, very thick] (1) edge (3);
\path[->, very thick] (3) edge (2);
\path[->, very thick] (2) edge (4);
\path[->, very thick] (4) edge (5);
\path[->, very thick] (5) edge (6);

\path[->, very thick] (6) edge (7);

\path[->, very thick] (7) edge (8);

\path[->, very thick] (8) edge (9);
 
\path[->, very thick] (9) edge (10);

 \path[->, very thick] (10) edge (1);
 
\node[base node, very thick, label={[text=red]above left:-(L-3)}](21) at (-8, -7.5) {$v_{1}^{(2)}$};
  
\node[base node, very thick, label={[text=red] above right:+1}] (22) at (-5.5, -7.5) {$v_{3}^{(2)}$};
 
 \node[base node, very thick, label={[text=red]above right:+1}] (23) at (-3, -7.5) { $v_{2}^{(2)}$};

  \node[base node, very thick, label={[text=red]above right:+3}] (24) at (-.5, -7.5) { $v_{4}^{(2)}$};
 
  \node[base node, very thick, label={[text=red]above right:+2}] (25) at (2, -7.5) {$v_{5}^{(2)}$};

 \node[base node, very thick, label={[text=red]above right:+2}](26) at (2, -10) {$v_{6}^{(2)}$};
 
 \node[ very thick ](27) at (-.5, -10) {...};
  
\node[base node, very thick, label={[text=red] above right: +2}] (28) at (-3, -10) { $v_{L-2}^{(2)}$};

 \node[base node, very thick, label={[text=red] above right:+2}] (29) at (-5.5, -10) {$v_{L-1}^{(2)}$};

  \node[base node, very thick, label={[text=red]above left:-(L-2)}] (30) at (-8, -10) {$v_{L}^{(2)}$};

\path[->, very thick] (21) edge (30);
\path[->, very thick] (30) edge (29);
\path[->, very thick] (29) edge (28);
\path[->, very thick] (28) edge (27);
\path[->, very thick] (27) edge (26);
\path[->, very thick] (26) edge (25);
\path[->, very thick] (23) edge (22);
\path[->, very thick] (22) edge (21);
\path[->, very thick] (25) edge (24);
\path[->, very thick] (24) edge (23);

\node[base node, very thick, label={[text=red]above left:-(L-3)}](11) at (-8, -2) {$v_{1}^{(1)}$};
  
\node[base node, very thick, label={[text=red] above right:+1}] (12) at (-5.5, -2)  {$v_{3}^{(1)}$};
 \node[base node, very thick, label={[text=red]above right:+1}] (13) at (-3, -2)  {$v_{2}^{(1)}$};

  \node[base node, very thick, label={[text=red]above right:+3}] (14) at (-.5, -2)  {$v_{4}^{(1)}$};
 
  \node[base node, very thick, label={[text=red]above right:+2}] (15) at (2, -2) {$v_{5}^{(1)}$};

 \node[base node, very thick, label={[text=red]above right:+2}](16) at (2, -4.5)  {$v_{6}^{(1)}$};
 
 \node[ very thick ](17) at (-.5, -4.5) {...};
  
\node[base node, very thick, label={[text=red] above right: +2}] (18) at (-3, -4.5) {$v_{L-2}^{(1)}$};

 \node[base node, very thick, label={[text=red] above right:+2}] (19) at (-5.5, -4.5)  {$v_{L-1}^{(1)}$};

  \node[base node, very thick, label={[text=red]above left:-(L-2)}] (20) at (-8, -4.5) {$v_{L}^{(1)}$};

\path[->, very thick] (11) edge (20);
 
\path[->, very thick] (20) edge (19);
 
\path[->, very thick] (19) edge (18);

\path[->, very thick] (18) edge (17);
 
\path[->, very thick] (17) edge (16);

\path[->, very thick] (16) edge (15);

\path[->, very thick] (13) edge (12);

\path[->, very thick] (12) edge (11);
 
\path[->, very thick] (15) edge (14);
\path[->, very thick] (14) edge (13);

\end{tikzpicture}
\caption{$5-$nodes ornament ($4\ar{C_L}$). The labeling used in the proof of case $s=5$ of Theorem \ref{thms}  is defined by equation \eqref{hlabel5}. 
The values of $\text{wt}_h$ are shown in red, indicating the nodes $u_1$, $u_2$, $u_3$, $u_{L-2}$ and $u_{L-3}$ must satisfy $f_0(u_p)=p$, $p\in\{1,2,3,L-2,L-3\}$.}
\label{5nodeweights}
\end{figure}\FloatBarrier

   
\noindent     $\text{wt}_h(v_{L}^{(j)}) = \pm (L-2)$,      $\text{wt}_h(v_{ 1}^{(j)})=\pm (L-3)$,   $\text{wt}_h (v_{2}^{(j)})=$$\text{wt}_h (v_{3}^{(j)})=\pm 1$,


  Let $V_u = \{u_1, u_2, u_3, u_{L-3}, u_{L-2} \}\subset V(\ar{G_0})$. Now, we define additional oriented edges between the vertices in $V_u$ and the vertices in $V(H_i)$, $1\leq i \leq 4$, and we collect these new edges in the set $E'$. For $p \in \{1,2,3,L-3,L-4\}$, let  
      $v u_p \in E'$ if $\text{wt}_h{(v)}=p$; also, $ u_pv \in E'$ if $\text{wt}_h{(v)}=-p$. 
With that we have completed defining the edge set  $E (\ar{G}) =E' \cup E(\ar{G_0}) \cup (\cup E(\ar{H_i} ))
$.    
Finally, we define a labeling function $f$   as usual: $f|_{V({G_0}) } \equiv f_0$, otherwise $f \equiv h$; clearly, $f$ is a bijection. 

We will now verify by definition that $f$ is a DDM labeling for $\ar{G}$: It is trivial to see that $\text{wt}_f(v)=0 $ for all $v\in \cup_{i=1}^4  V(\ar{H_i})$; further on, if $u\in V(
   \ar{G_0})\setminus V_u$, then $\text{wt}_f(u)=\text{wt}_{f_0}(u)=0$. In addition: 
\begin{eqnarray*}
    \text{wt}_f(u_1)&=& -(n+3)-(n+2) - (n+3L+3)-(n+3L+2)\\ & &+(n+2L+3)+(n+2L+2) +(n+L+3)+(n+L+2) = 0, \\
    \text{wt}_f(u_2) &=& (n+2L-1)+(n+2L-2)+...+(n+L+6)+(n+L+5)\\& &+(n+3L-1)+(n+3L-2)+...+(n+2L+6)+(n+2L+5)\\& &-(n+5)-(n+6)-...-(n+L-2)-(n+L-1)
      \\& &-(n+3L+5)-(n+3L+6)-...-(n+4L-2)-(n+4L-1)
\\
                &=&0.5 (L-5)((n+2L-1+n+L+5)+(n+3L-1+n+2L+5)\\
 & &   -(n+5+n+L-1)-(n+3L+5+n+4L-1))\\
    &=& 0.5 (L-5)(4n+8L+8-(4n+8L+8)) = 0, \\
          \text{wt}_f(u_3) &=& (n+L+4)+(n+2L+4)-(n+4)-(n+3L+4) = 0, \\
   \text{wt}_f(u_{L-3} )&=&  (n+1) + (n+3L+1) - (n+L+1)- (n+2L+1) = 0, \\
    \text{wt}_f(u_{L-2} )&=&  (n+L) + (n+4L) - (n+2L)- (n+3L)  = 0.
   \end{eqnarray*}
Thus, $\ar{G}$ is a DDMOG, and $\ar{H}$ is a $5-$nodes ornament.
   
    {\texttt{Case s=6:}} 
    Let $L\ge 8$ and let $\ar{G_0}$ be a DDMOG with $n\ge L-2$ vertices, and related DDM labeling  $f_0$ on $V(G_0)$ for the underlying graph $G_0$. Suppose at any $u_p\in V(G_0)$ we have $f_0(u_p)=p, 1\le p\le n$. We work with four copies of an L-cycle $H_j$, $1\le j\le 4$, with vertex sets 
    $V(H_{j})=  \{v_i^{(j-1)} \}_{i=1}^L$.
Next, we define the oriented edges  as shown in  Figure~\ref{6nodeAttachGen}.
 \begin{comment}
    
  \noindent       $ \displaystyle E(\ar{H_1}) = \{ v_{n+L} v_{n+1},  v_{n+1} v_{n+3},  v_{n+3}v_{n+4},   v_{n+4}v_{n+2}, v_{n+2}v_{n+5}\} \cup \{ v_{i} v_{i+1} \}_{i=n+ 5}^{ n+L-1}$,    
       
       \noindent    and      $ \displaystyle E(\ar{H_2}) = E_2 \cup E_2'$,  $ \displaystyle E(\ar{H_3}) = E_3 \cup E_3'$,   and      $\displaystyle E(\ar{H_4}) = E_4 \cup E_4'$,    where
 
  \noindent $E_2= \{ v_{i+1} v_{i} \}_{i=n+L+ 5}^{ n+2L-1}$, $E_3=\{ v_{i+1} v_{i} \}_{i=n+2L+ 5}^{ n+3L-1}$, $E_4=\{   v_{i} v_{i+1}\}_{i= n+3L+5}^{n+4L-1}$,  while
            
            \noindent $ \displaystyle E_3'= \{ v_{n+2L+1} v_{n+3L},   v_{n+2L+3}v_{n+2L+1},  v_{n+2L+4}v_{n+2L+3},   v_{n+2L+2}v_{n+2L+4},
v_{n+2L+5}v_{n+2L+2}\}$,      

\noindent $E_4 ' = \{ v_{n+4L} v_{n+3L+1},  v_{n+3L+1} v_{n+3L+3},  v_{n+3L+3}v_{n+3L+4},   v_{n+3L+4}v_{n+3L+2},
             v_{n+3L+2}v_{n+3L+5}\}$.

              \vspace{2.13mm}
\end{comment}
 The labeling $h: V(H) \rightarrow \{n+1, \hdots, n+4L\}$ is the bijection defined by  $h(v_{i}^{(j)}) = n+jL +i$, $1\leq i \leq L$, $0\leq j \leq 3$. Depending on the orientation  in each cycle (see Figure~\ref{6nodeAttachGen}), we have  the following values for $\text{wt}_h$:    
$\pm 1$, $\pm 2$, $\pm 3$, $\pm 4$, $\pm (L-3)$, $\pm (L-2)$.   Since the maximal  value of  $\vert \text{wt}_h \vert$ is $L-2$,  we need $L-2 \leq n$. There will be six distinct nodes with labels $1$, $2$, $3$, $4$, $L-3$, and $L-2$ if $L\geq 8$;   if $L< 8$, some of the node labels will be repeated.

 Now, we define additional oriented edges   {between the nodes} $u_p \in V(G_0)$,  $ p \in \{ 1, 2, 3, 4, {L-3}, {L-2} \}$,   and any vertex $v$ in $V(H)$; we  collect these new edges in  set $E'$.      As usual, for $p \in \{1,2,3,4,L-3,L-2\}$, let
   $v u_p \in E'$  if $\text{wt}_h(v)=p$; if $\text{wt}_h(v)=-p$, then let  $ u_p v \in E'$.

 With that we have completed defining the edge set  of $\ar{G}$. 
 Finally, we define a function $f$ in the usual way.
  Due to the new edges collected in $E'$, we have that $\text{wt}_f(v)=0 $ for all $v\in   V({H})$. Observe that if $p\notin  \{1, 2, 3, 4, {L-3}, {L-2}\}$, then $\text{wt}_f(u_p)=\text{wt}_{f_0}(u_p)=0$. In addition: 

\begin{figure}
\centering
\begin{tikzpicture}
[scale=0.94, node distance=1.4cm, font=\scriptsize, 
        align=center, base node/.style={circle,draw,minimum size=37pt}]
\node[base node, very thick, label={[text=red]left:+(L-3)}](n+1) at (-8, 2.5) {$v_{1}^{(0)}$};
  
\node[base node, very thick, label={[text=red] above right:-3}] (n+3) at (-5.5, 2.5) {$v_{3}^{(0)}$};
 \node[base node, very thick, label={[text=red]above right:+1}] (n+4) at (-3, 2.5) { $v_{4}^{(0)}$};

  \node[base node, very thick, label={[text=red]above right:-1}] (n+2) at (-.5, 2.5) { $v_{2}^{(0)}$};

  \node[base node, very thick, label={[text=red]above right:-4}] (n+5) at (2, 2.5) {$v_{5}^{(0)}$}; 
  
  \node[base node, very thick, label={[text=red]right:-2}] (n+6) at (2,0) {$v_{6}^{(0)}$};

 \node[base node, very thick, label={[text=red] above right:-2}](n+7) at (-.5, 0) { $v_{7}^{(0)}$};
  
\node[base node, very thick, label={[text=red] left:+(L-2)}] (n+L) at (-8, 0) {$v_{L}^{(0)}$};
 \node[base node, very thick, label={[text=red] above right:-2}] (n+L-1) at (-5.5, 0) {$v_{L-1}^{(0)}$};

  \node[ very thick] (dd1) at (-3, 0) {...};

\path[->, very thick] (n+1) edge (n+3);
 
\path[->, very thick] (n+3) edge (n+4);
 
\path[->, very thick] (n+4) edge (n+2);

\path[->, very thick] (n+2) edge (n+5);
 
\path[->, very thick] (n+5) edge (n+6);

\path[->, very thick] (n+6) edge (n+7);

\path[->, very thick] (n+7) edge (dd1);

\path[->, very thick] (dd1) edge (n+L-1);


\path[->, very thick] (n+L-1) edge (n+L);
 
\path[->, very thick] (n+L) edge (n+1);

\node[base node, very thick, label={[text=red]left:-(L-3)}](n+L+1) at (-8, -2.5) {$v_{1}^{(1)}$};
  
\node[base node, very thick, label={[text=red] above right:+3}] (n+L+3) at (-5.5, -2.5) {$v_{3}^{(1)}$};
 \node[base node, very thick, label={[text=red]above right:-1}] (n+L+4) at (-3, -2.5) {$v_{4}^{(1)}$};

  \node[base node, very thick, label={[text=red]above right:+1}] (n+L+2) at (-.5, -2.5)  {$v_{2}^{(1)}$};

  \node[base node, very thick, label={[text=red]above right:+4}] (n+L+5) at (2, -2.5)  {$v_{5}^{(1)}$};
  
  \node[base node, very thick, label={[text=red]right:+2}] (n+L+6) at (2, -5)  {$v_{6}^{(1)}$};

 \node[base node, very thick, label={[text=red] above left:+2}](n+L+7) at (-.5, -5)  {$v_{7}^{(1)}$};
  
\node[base node, very thick, label={[text=red] left:-(L-2)}] (n+2L) at (-8, -5)  {$v_{L}^{(1)}$};
 \node[base node, very thick, label={[text=red] above right:+2}] (n+2L-1) at (-5.5, -5)  {$v_{L-1}^{(1)}$};

  \node[ very thick] (dd12) at (-3, -5) {...};

\path[->, very thick] (n+L+1) edge (n+2L);
 
\path[->, very thick] (n+2L) edge (n+2L-1);
 
\path[->, very thick] (n+2L-1) edge (dd12);

 
\path[->, very thick] (dd12) edge (n+L+7);

\path[->, very thick] (n+L+7) edge (n+L+6);

\path[->, very thick] (n+L+6) edge (n+L+5);

\path[->, very thick] (n+L+5) edge (n+L+2);

\path[->, very thick] (n+L+2) edge (n+L+4);
 
\path[->, very thick] (n+L+4) edge (n+L+3);
  
\path[->, very thick] (n+L+3) edge (n+L+1);
 
\node[base node, very thick, label={[text=red]left:-(L-3)}](n+2L+1) at (-8, -7.5)  {$v_{1}^{(2)}$};
  
\node[base node, very thick, label={[text=red] above right:+3}] (n+2L+3) at (-5.5, -7.5)   {$v_{3}^{(2)}$};
 
 \node[base node, very thick, label={[text=red]above right:-1}] (n+2L+4) at (-3, -7.5)   {$v_{4}^{(2)}$};

  \node[base node, very thick, label={[text=red]above right:+1}] (n+2L+2) at (-.5, -7.5)   {$v_{2}^{(2)}$};

  \node[base node, very thick, label={[text=red]above right:+4}] (n+2L+5) at (2, -7.5)   {$v_{5}^{(2)}$};
  
  \node[base node, very thick, label={[text=red]right:+2}] (n+2L+6) at (2, -10)   {$v_{6}^{(2)}$};

 \node[base node, very thick, label={[text=red] above left:+2}](n+2L+7) at (-.5, -10)   {$v_{7}^{(2)}$};
  
\node[base node, very thick, label={[text=red] left:-(L-2)}] (n+3L) at (-8, -10)   {$v_{L}^{(2)}$};

 \node[base node, very thick, label={[text=red] above right:+2}] (n+3L-1) at (-5.5, -10)  {$v_{L-1}^{(2)}$};

  \node[ very thick] (dd13) at (-3, -10) {...};

\path[->, very thick] (n+2L+1) edge (n+3L);
 
\path[->, very thick] (n+3L) edge (n+3L-1);
 
\path[->, very thick] (n+3L-1) edge (dd13);

\path[->, very thick] (dd13) edge  (n+2L+7);

\path[->, very thick] (n+2L+7) edge (n+2L+6);

\path[->, very thick] (n+2L+6) edge (n+2L+5);

\path[->, very thick] (n+2L+5) edge (n+2L+2);

\path[->, very thick] (n+2L+2) edge (n+2L+4);
 
\path[->, very thick] (n+2L+4) edge (n+2L+3);
  
\path[->, very thick] (n+2L+3) edge (n+2L+1);

\node[base node, very thick, label={[text=red]left:+(L-3)}](n+3L+1) at (-8, -12.5)   {$v_{1}^{(3)}$};
  
\node[base node, very thick, label={[text=red] above right:-3}] (n+3L+3) at (-5.5, -12.5)   {$v_{3}^{(3)}$};

 \node[base node, very thick, label={[text=red]above right:+1}] (n+3L+4) at (-3, -12.5)   {$v_{4}^{(3)}$};

  \node[base node, very thick, label={[text=red]above right:-1}] (n+3L+2) at (-.5, -12.5)   {$v_{2}^{(3)}$};

  \node[base node, very thick, label={[text=red]above right:-4}] (n+3L+5) at (2, -12.5)   {$v_{5}^{(3)}$}; 
  
  \node[base node, very thick, label={[text=red]right:-2}] (n+3L+6) at (2, -15)  {$v_{6}^{(3)}$};

 \node[base node, very thick, label={[text=red] above left:-2}](n+3L+7) at (-.5, -15)   {$v_{7}^{(3)}$};
  
\node[base node, very thick, label={[text=red] left:+(L-2)}] (n+4L) at (-8, -15)  {$v_{L}^{(3)}$};
 \node[base node, very thick, label={[text=red] above right:-2}] (n+4L-1) at (-5.5, -15)   {$v_{L-1}^{(3)}$};

  \node[ very thick] (dd14) at (-3, -15) {...};

\path[->, very thick] (n+3L+1) edge (n+3L+3);
 
\path[->, very thick] (n+3L+3) edge (n+3L+4);
 
\path[->, very thick] (n+3L+4) edge (n+3L+2);

\path[->, very thick] (n+3L+2) edge (n+3L+5);
 
\path[->, very thick] (n+3L+5) edge (n+3L+6);

\path[->, very thick] (n+3L+6) edge (n+3L+7);

\path[->, very thick] (n+3L+7) edge (dd14);

\path[->, very thick] (dd14) edge (n+4L-1);

\path[->, very thick] (n+4L-1) edge (n+4L);
 
\path[->, very thick] (n+4L) edge (n+3L+1);

\end{tikzpicture}
\caption{ $6-$nodes ornament, $4\ar{C_L}$. The labeling  used in the proof of Theorem \ref{thms}, case $s=6$, is defined  as $h(v_{i}^{(j)}) = n+jL +i$, $1\leq i \leq L$, $0\leq j \leq 3$. As usual, the values of $\text{wt}_h$ are shown in red, indicating the nodes $u_p$ must satisfy $f_0(u_p)=p$,  $p \in \{1,2,3,4, L-3, L-2\}$. }
\label{6nodeAttachGen}
\end{figure}\FloatBarrier

   \begin{eqnarray*}
        \text{wt}_f(u_1)&=&    (n+4)+(n+L+2)+(n+2L+2)+(n+3L+4)\\ & &
    -((n+2)+(n+L+4)+(n+2L+4)+(n+3L+2))\\
   & =&4n +6L +12-(4n +6L+12)
    = 0 \\
    \text{wt}_f(u_2) &=& 
    n+L+6+n+L+7+...+n+2L-2+n+2L-1\\
  & & +  n+2L+6+n+2L+7+...+n+3L-2+n+3L-1\\
    & &-(n+6+n+7+...+n+L-2+n+L-1\\
  & & +  n+3L+6+n+3L+7+...+n+4L-2+n+4L-1)\\
    & =&0.5 (L-6)(n+L+6+n+2L-1)+(n+2L+6+n+3L-1)\\
    & &-(n+6+n+L-1)-(n+3L+6+n+4L-1)\\
    & =& 0.5(L-6)(4n+8L+10-(4n+8L+10))
    = 0,\\
 \text{wt}_f(u_3) &=& 
    (n+L+3)+(n+2L+3)-((n+3)+(n+3L+3))=0,
     \\
        \text{wt}_f(u_4) & =&  
    (n+L+5)+(n+2L+5)-((n+5)+(n+3L+5))
       =0,\\
    \text{wt}_f(u_{L-3} )&=&       (n+1) + (n+3L+1) - ((n+L+1)+ (n+2L+1) ) =0,
       \\
    \text{wt}_f(u_{L-2} )  &= &  
    (n+L) + (n+4L) - ((n+2L)+ (n+3L)) 
 \\  &=& 2n + 5L - (2n +5L) = 0.
    \end{eqnarray*}

%
 
%
\noindent At all $v \in V(G)$, it holds  $\text{wt}_f(v)=0$. Thus,  $\ar{G}$ is a DDMOG, and $\ar{H}$ is a $6-$nodes ornament. 
\end{proof}
\subsection{Proof of Theorem~\ref{thm:zigzag} and related examples:} \label{thm:zigzagproof}

    Before diving into the proof of Theorem~\ref{thm:zigzag}, we first review two examples that showcase how the maximum number of  nodes  differs depending on whether $L$ is odd or even.  
Sufigure~\ref{Hep(L-1)}   contains  an example of  the case  when $L=4k+7$  (specifically, here $L=7$), where an ornament is presented with labeling that leads to a DDMOG, after attaching it to any DDMOG $\ar{G_0}$ with $n=6$ vertices.  After introducing additional edges to and from the nodes $u_i$, $1\leq i \leq 6$, with labels $f_0(u_i)=i$, it is easy to confirm that $ \text{wt}_f(u_i)=0$, $1\leq i \leq 6$. 
For an example when $L$ is even  we point to  Figure \ref{Oct(L-2)} ($L=8$). Note that in this case we again have six nodes even though the value of $L$ has increased from $7$ to $8$. 

\begin{proof}[Proof of  Theorem~\ref{thm:zigzag}]
Let $H_i=C_L$, $i \in \{1,2,3,4\}$ be four copies of an  $L-$cycle, with vertex sets $V( {H_i})=\{v_{i,j}\}_{j=1}^L$. We introduce an orientation on  $H =\cup_{i=1}^4 H_i$ by defining  the oriented edges sets in the following way:  
     
      $E(\ar{H_i}) =\{v_{i,1}v_{i,2}, v_{i,2}v_{i,3}, \hdots, v_{i,(L-1)}v_{i,L},v_{i,L}v_{i,1}\}$, for $i \in \{1, 4\}$,
     
    $E(\ar{H_i}) =\{v_{i,2}v_{i,1}, v_{i,3}v_{i,2}, \hdots, v_{i,L}v_{i,(L-1)},v_{i,1}v_{i,L}\}$, for $i \in \{2, 3\}$. 

    The approach in the proof of of Theorem~\ref{thm:zigzag}  varies related to the value of $L$ ($\text{mod} 4$); thus we will need to look at four separate cases (Cases I - IV). The proofs of cases I and II are very similar, with only minor differences; cases III and IV are also very similar to one another.

 \texttt{Case I:} Let $L=4k+7$, $k\geq 0$.   Table~\ref{maxnodelabels} contains a schematics of the  labeling function $h$ we use in this proof. We  compute the initial  weight values at every vertex (with the sign changing from $+$ to $-$ depending on the orientation of each cycle $\ar{H_i}$, $1\leq i \leq 4$:

 \begin{eqnarray*}
   c_1:=  \text{wt}_h(v_{i,1})  &=& \pm [ (\frac{3L+3}{4})-(3+\frac{L-7}{4})] = \pm\frac{L-1}{2}, \\
   c_2:=  \text{wt}_h(v_{i,2})&=&\pm [L-1], \\
   c_3:=  \text{wt}_h(v_{i,3})&=&\pm [ (3+\frac{L-7}{4})-(5+3(\frac{L-7}{4}))] = \pm\frac{L-3}{2},\\
c_4:= \text{wt}_h(v_{i,4})&=&\pm [ L-2], 
\\
&\vdots &\\
c_{{\small L-1}}:= \text{wt}_h(v_{i, {  L-1}})&=& \pm[\lceil\frac{L}{2}\rceil-1]=\pm\frac{L-1}{2}.
\end{eqnarray*}


\begin{table}[H]
    \centering
    \begin{tabular}{c|c|c|c|c|c|c|c|c|c}
        L=4k+8 &  $v_{2j}$ & $v_2$ &$v_4$&$v_6$&...&$v_L$ \\
         & h($v_{2j}$)& $\frac{L}{4}+1$&$3+3(\frac{L-4}{4})$ &$\frac{L}{4}+2$&...&$\frac{L}{2}+1$\\
        &&&&&& \\
        &$v_{2j-1}$ & $v_1$&$v_3$&$v_5$&...&$v_{L-1}$ \\
        & h($v_{2j-1}$)& 1&$L$&2&...&$\frac{3L+4}{4}$\\
       \hline
        L=4k+7 &  $v_{2j}$& $v_2$ &$v_4$&$v_6$&...&$v_{L-1}$\\
       & h($v_{2j}$)& $3+(\frac{L-7}{4})$&$5+3(\frac{L-7}{4})$&$4+(\frac{L-7}{4})$&...&$\lceil\frac{L}{2}\rceil$\\
       &&&&&& \\
       &$v_{2j-1}$& $v_1$&$v_3$&$v_5$&...&$v_{L}$\\
       &h($v_{2j-1}$)& 1&$L$&2&...&$\frac{3L+3}{4}$\\
       \hline
        L=4k+6 & $v_{2j}$& $v_2$ &$v_4$&$v_6$&...&$v_L$\\
        &h($v_{2j}$)& $5+3(\frac{L-6}{4})$&$3+(\frac{L-6}{4})$&$4+3(\frac{L-6}{4})$&...&$\frac{L}{2}+1$\\
         &&&&&& \\
        &$v_{2j-1}$& $v_1$&$v_3$&$v_5$&...&$v_{L-1}$\\
        &h($v_{2j-1}$)& 1&$L$&2&...&$\frac{L+2}{4}$\\
       \hline
        L=4k+5 & $v_{2j}$& $v_2$ &$v_4$&$v_6$&...&$v_{L-1}$ \\
        &h($v_{2j}$)& $4+3(\frac{L-5}{4})$&$\lceil\frac{L}{4}\rceil+1$&$3+3(\frac{L-5}{4})$&...&$\lceil\frac{L}{2}\rceil$\\
         &&&&&& \\
        &$v_{2j-1}$& $v_1$&$v_3$&$v_5$&...&$v_{L}$\\
        &h($v_{2j-1}$)& 1&$L$&2&...&$\lceil\frac{L}{4}\rceil$\\
    \end{tabular}
    \caption{Vertex labels for max nodes ornaments}
    \label{maxnodelabels}
\end{table} 

Observe that the total number of different wight values listed above is $L-1$; thus, we work with $L-1$ nodes in the DDMOG $\ar{G_0}$, $u_1, \hdots, u_{L-1}$, which have labels  $c_1, c_2, \hdots, c_{L-1}$. Here we work with  $\vert V({G_0})\vert =n\geq L-1$, and a   a DDM labeling $f_0$ of $\ar{G_0}$.
Next, we introduce a labeling function  $f:V(\ar{G_0}\oplus_{\text{wt}_h} (\bigcup_{i=1}^4 \ar{H_i}))\rightarrow\{1,...,n,...,n+4L\}$  such that 
  $f(v) =       f_0(v)$ when $v \in V(G_0)$; otherwise, 
     $f(v)= h(v)$.  It is clear that $f$ is a bijection. Now, we introduce additional edges from or to  each  vertex in $V(H_i)$, $1\leq i \leq 4$,  with the intention to neutralize the weights at each vertex; we collect these new edges in a new  set $E'$. For example, if a vertex in $V(H_i)$ has a weight of $\text{wt}_{h}(v)=1$, we add the edge $vu_1$ to $E'$. Further, if a vertex in $V(H_i)$ has a weight of $\text{wt}_{h}(v)=-1$, add the edge $u_1v$ to $E'$. We repeat this for every $v \in \cup_{i=1}^4 V(H_i)$ by attaching the vertex to the node labeled with its weight. With that we have completed defining the edge set  $E (\ar{G}) =E' \cup E(G_0) \cup \left( \cup E(\cup_{i=1}^4\ar{H_i} )\right)
$. With that,  it holds $\text{wt}_f(v)=0$  at every $v \in V(\cup_{i=1}^4{H_i} )$. 

It will  show  that the value of $\text{wt}_f$ at every node is $0$; {verifying this is an easy exercise for the reader.}
With that, we have verified the theorem for the case $L=4k+7$, $k\geq 0$.

\vspace{2.3mm}

\texttt{Case II:} $L=4k+5$, $k\geq 0$; we leave the details to the reader.

\begin{figure}[htbp]
\centering
\begin{subfigure}[t]{0.48\textwidth}
\centering
\begin{tikzpicture}
[node distance=1.4cm, base node/.style={circle,draw,minimum size=25pt}]

\node[base node, very thick, label={[text=red] left:+3}] (7) at (-5, 2) {7};
\node[base node, very thick, label={[text=red]above right:-6}] (9) at (-3, 2) {9};
\node[base node, very thick, label={[text=red]right:-2}] (13) at (-1, 2) {13};
\node[base node, very thick, label={[text=red]right:+5}] (11) at (0.5, 1) {11};
\node[base node, very thick, label={[text=red]right:+1}](8) at (-1, 0) {8};
\node[base node, very thick, label={[text=red] left:+3}] (12) at (-5, 0) {12};
\node[base node, very thick, label={[text=red]above right:-4}] (10) at (-3, 0) {10};

\path[->, very thick] (7) edge (9);
\path[->, very thick] (9) edge (13);
\path[->, very thick] (13) edge (11);
\path[->, very thick] (11) edge (8);
\path[->, very thick] (8) edge (10);
\path[->, very thick] (10) edge (12);
\path[->, very thick] (12) edge (7);

\node[base node, very thick, label={[text=red] left:-3}] (14) at (-5, -2) {14};
\node[base node, very thick, label={[text=red]above right:+6}] (16) at (-3, -2) {16};
\node[base node, very thick, label={[text=red]right:+2}] (20) at (-1, -2) {20};
\node[base node, very thick, label={[text=red]right:-5}] (18) at (0.5, -3) {18};
\node[base node, very thick, label={[text=red]right:-1}](15) at (-1, -4) {15};  
\node[base node, very thick, label={[text=red] left:-3}] (19) at (-5, -4) {19};
\node[base node, very thick, label={[text=red]above right:+4}] (17) at (-3, -4) {17};

\path[->, very thick] (14) edge (19);
\path[->, very thick] (19) edge (17);
\path[->, very thick] (17) edge (15);
\path[->, very thick] (15) edge (18);
\path[->, very thick] (18) edge (20);
\path[->, very thick] (20) edge (16);
\path[->, very thick] (16) edge (14);

\node[base node, very thick, label={[text=red] left:-3}] (21) at (-5, -6) {21};
\node[base node, very thick, label={[text=red]above right:+6}] (23) at (-3, -6) {23};
\node[base node, very thick, label={[text=red]right:+2}] (27) at (-1, -6) {27};
\node[base node, very thick, label={[text=red]right:-5}] (25) at (0.5, -7) {25};
\node[base node, very thick, label={[text=red]right:-1}](22) at (-1, -8) {22};
\node[base node, very thick, label={[text=red] left:-3}] (26) at (-5, -8) {26};
\node[base node, very thick, label={[text=red]above right:+4}] (24) at (-3, -8) {24};

\path[->, very thick] (21) edge (26);
\path[->, very thick] (26) edge (24);
\path[->, very thick] (24) edge (22);
\path[->, very thick] (22) edge (25);
\path[->, very thick] (25) edge (27);
\path[->, very thick] (27) edge (23);
\path[->, very thick] (23) edge (21);

\node[base node, very thick, label={[text=red] left:+3}] (28) at (-5, -10) {28};
\node[base node, very thick, label={[text=red]above right:-6}] (30) at (-3, -10) {30};
\node[base node, very thick, label={[text=red]right:-2}] (34) at (-1, -10) {34};
\node[base node, very thick, label={[text=red]right:+5}] (32) at (0.5, -11) {32};
\node[base node, very thick, label={[text=red]right:+1}](29) at (-1, -12) {29};  
\node[base node, very thick, label={[text=red] left:+3}] (33) at (-5, -12) {33};
\node[base node, very thick, label={[text=red]above right:-4}] (31) at (-3, -12) {31};

\path[->, very thick] (28) edge (30);
\path[->, very thick] (30) edge (34);
\path[->, very thick] (34) edge (32);
\path[->, very thick] (32) edge (29);
\path[->, very thick] (29) edge (31);
\path[->, very thick] (31) edge (33);
\path[->, very thick] (33) edge (28);

\end{tikzpicture}
\caption{$6$-nodes ornament, $4\ar{C_7}$. 
}
\label{Hep(L-1)}
 
\end{subfigure}
\hfill
\begin{subfigure}[t]{0.48\textwidth}
\centering
\begin{tikzpicture}
[node distance=1.4cm, base node/.style={circle,draw,minimum size=25pt}]

\node[base node, very thick, label={[text=red] left:+2}] (8) at (-5, 2) {8};
\node[base node, very thick, label={[text=red]above right:-7}] (10) at (-3, 2) {10};
\node[base node, very thick, label={[text=red]above right:-3}] (15) at (-1, 2) {15};
\node[base node, very thick, label={[text=red]right:+6}] (13) at (1, 2) {13};
\node[base node, very thick, label={[text=red]right:+2}] (9) at (1, 0) {9};
\node[base node, very thick, label={[text=red]above right:-5}](11) at (-1, 0) {11};
\node[base node, very thick, label={[text=red]above right:-1}] (14) at (-3, 0) {14};
\node[base node, very thick, label={[text=red] left:+6}] (12) at (-5, 0) {12};

\path[->, very thick] (8) edge (10);
\path[->, very thick] (10) edge (15);
\path[->, very thick] (15) edge (13);
\path[->, very thick] (13) edge (9);
\path[->, very thick] (9) edge (11);
\path[->, very thick] (11) edge (14);
\path[->, very thick] (14) edge (12);
\path[->, very thick] (12) edge (8);

\node[base node, very thick, label={[text=red] left:-2}] (16) at (-5, -2) {16};
\node[base node, very thick, label={[text=red]above right:+7}] (18) at (-3, -2) {18};
\node[base node, very thick, label={[text=red]above right:+3}] (23) at (-1, -2) {23};
\node[base node, very thick, label={[text=red]right:-6}] (21) at (1, -2) {21};
\node[base node, very thick, label={[text=red]right:-2}] (17) at (1, -4) {17};
\node[base node, very thick, label={[text=red]above right:+5}](19) at (-1, -4) {19};  
\node[base node, very thick, label={[text=red]above right:+1}] (22) at (-3, -4) {22};
\node[base node, very thick, label={[text=red] left:-6}] (20) at (-5, -4) {20};

\path[->, very thick] (16) edge (20);
\path[->, very thick] (20) edge (22);
\path[->, very thick] (22) edge (19);
\path[->, very thick] (19) edge (17);
\path[->, very thick] (17) edge (21);
\path[->, very thick] (21) edge (23);
\path[->, very thick] (23) edge (18);
\path[->, very thick] (18) edge (16);

\node[base node, very thick, label={[text=red] left:-2}] (24) at (-5, -6) {24};
\node[base node, very thick, label={[text=red]above right:+7}] (26) at (-3, -6) {26};
\node[base node, very thick, label={[text=red]above right:+3}] (31) at (-1, -6) {31};
\node[base node, very thick, label={[text=red]right:-6}] (29) at (1, -6) {29};
\node[base node, very thick, label={[text=red]right:-2}] (25) at (1, -8) {25};
\node[base node, very thick, label={[text=red]above right:+5}](27) at (-1, -8) {27};
\node[base node, very thick, label={[text=red]above right:+1}] (30) at (-3, -8) {30};
\node[base node, very thick, label={[text=red] left:-6}] (28) at (-5, -8) {28};

\path[->, very thick] (24) edge (28);
\path[->, very thick] (28) edge (30);
\path[->, very thick] (30) edge (27);
\path[->, very thick] (27) edge (25);
\path[->, very thick] (25) edge (29);
\path[->, very thick] (29) edge (31);
\path[->, very thick] (31) edge (26);
\path[->, very thick] (26) edge (24);

\node[base node, very thick, label={[text=red] left:+2}] (32) at (-5, -10) {32};
\node[base node, very thick, label={[text=red]above right:-7}] (34) at (-3, -10) {34};
\node[base node, very thick, label={[text=red]above right:-3}] (39) at (-1, -10) {39};
\node[base node, very thick, label={[text=red]right:+6}] (37) at (1, -10) {37};
\node[base node, very thick, label={[text=red]right:+2}] (33) at (1, -12) {33};
\node[base node, very thick, label={[text=red]above right:-5}](35) at (-1, -12) {35};  
\node[base node, very thick, label={[text=red]above right:-1}] (38) at (-3, -12) {38};
\node[base node, very thick, label={[text=red] left:+6}] (36) at (-5, -12) {36};

\path[->, very thick] (32) edge (34);
\path[->, very thick] (34) edge (39);
\path[->, very thick] (39) edge (37);
\path[->, very thick] (37) edge (33);
\path[->, very thick] (33) edge (35);
\path[->, very thick] (35) edge (38);
\path[->, very thick] (38) edge (36);
\path[->, very thick] (36) edge (32);

\end{tikzpicture}
 
\caption{$6$-nodes ornament, $4\ar{C_8}$.}
\label{Oct(L-2)}
\end{subfigure}
\caption{Examples of c-ornaments utilizing the labeling pattern as per the proof of Theorem \ref{thm:zigzag}.}
\end{figure}

\text{Case III:} Let $L=4k+8$, $k\geq 0$. We use the values of the labeling function $h$ as presented in Table~\ref{maxnodelabels} to compute the initial  weights at every vertex  (with the sign changing from $+$ to $-$ depending on the orientation of the cycle $\ar{H_i}$, $1\leq i \leq 4$:

\begin{eqnarray*}
  c_1:=\text{wt}_h(v_{i,1}) &=& \pm [ (\frac{L}{2}+1)-(\frac{L}{4}+1)] = \pm\frac{L}{4},\\
  c_2:=  \text{wt}_h(v_{i,2}) &=& \pm [L-1],  \\
 c_3:=   \text{wt}_h(v_{i,3}) &=& \pm [ (\frac{L}{4}+1)-(3+3(\frac{L-4}{4}))] =\pm\frac{L-2}{2},\\ c_4:=   \text{wt}_h(v_{i,4}) & =&\pm [ L-2], \\
    & \vdots & \\
c_{L-2}:=\text{wt}_h(v_{i,L-2}) &=& \pm[\frac{3L+4}{4}-1]=\pm\frac{3L}{4}.\\
\end{eqnarray*}
    The total number of different values listed above is $L-2$; thus, we work with $L-2$ nodes ($u_{c_j}$, $1\leq j \leq L-2$) in a DDMOG $\ar{G_0}$ with $n\geq L-2$ vertices and DDM labeling $f_0$. 

We work with bijection $f:V(\ar{G_0}\oplus_{\text{wt}_h} (\cup_{i=1}^4 \ar{H_i}))\rightarrow\{1,2, \hdots, n+4L\}$  such that 
  $f(v) =  
      f_0(v)$ {when}  $v \in V(G_0)$; otherwise   
 $f(v) = h(v)$. 
  Now, we define additional edges from/to  each  vertex in $V(H_i)$, $1\leq i \leq 4$,  with the intention to 'neutralize' the weights at each vertex. For example, if a vertex in $V(H_i)$ has a weight of $\text{wt}_{f}(v)=1$, add the edge $vu_1$. Further, if a vertex in $V(H_i)$ has a weight of $\text{wt}_{f}(v)=-1$, add the edge $u_1v$. Repeat this for every $v \in \cup_{i=1}^4 V(H_i)$ by attaching the vertex to the node labeled with its weight. With that we have completed defining the edge set  $E (\ar{G}) =E' \cup V(G_0) \cup \left( \cup_{i=1}^4 E(\ar{H_i} )\right)
$. The added edges (from $E'$) ensure that $\text{wt}_f(v)=0$ for every $v \in V(\cup_{i=1}^4 \ar{H_i})$. 
It it easy to show that the value of $\texttt{wt}_f$ at every node is also  $0$; therefore, $\ar{G}$ is a DDMOG when $L=4k+8, k\geq 0$.  
 

\texttt{Case IV:}   $L=4k+6$, $k\geq 0$;  we leave the details to the reader.
\end{proof}

\section{Conclusion and open questions}\label{summary}

In this paper we have presented ways to create $s-$nodes ornaments of type $4\ar{C_L}$; in the proofs we used  repetitive, consecutive labeling patterna, broadly  described by \eqref{repetPatt}. Theorem \ref{thms}    offers ways to generate $s$-nodes ornaments, where $2\leq s\leq 6$,  with the assumption
$a_s\leq L\le b_s$ (recall, $a_s$, $b_s$ take values as listed in Table~\ref{s-table}).  
Theorem~\ref{thm:zigzag} addresses a maximizing question, where $s$ equals   $(L-1)$ or $(L-2)$, depending on whether $L$ is odd or even.   One possible application of these results could be to build complex networks with the presented method; specifically, multiple cyclic networks could be attached to complete ones (with DDM labeling), without creating structural bottlenecks or areas of chronic surplus. We also note that  Lemma~\ref{minLvsS} indicates that the upper bound for $s$ relative to the value of $L$ may be higher that the one given in Theorem~\ref{thm:zigzag}, and we anticipate that the critical value ($s=L$) may be achieved on $4\ar{C_L}$ if other, non-consecutive labeling patterns are employed.  So, we have left a question not fully answered in this paper: {\it Given $L\geq 3$, what is the maximal value of $s$ so that $4\ar{C_L}$ is a $s-$nodes ornament?}

Other unanswered questions arose as we were working on this paper. One was already posed  in the introduction, motivated by the observation in  Remark~\ref{rem1}: {\it  If $\ar{H}$ is any oriented graph paired with a labeling function $h$ such that         $\text{wt}_h(\ar{H})=0$, does there exist an integer $k>0$ such that $k$ copies of $\ar{H}$  form an ornament?} We list a few more open  questions here, such as:  {\it  
What is the minimal number of vertices in an $s-$nodes ornament?} If we call an ornament a {\it minimal} $s-$nodes ornament when it has the minimal number of vertices among all of the $s-$nodes ornaments, then the question becomes: {\it Given $s$, what is the minimal $s-$node ornament?} In this paper we have only studied ornaments formed by cycles, that is c-ornaments, and  we reflect briefly on what we know about these types of ornaments in term of minimality: When $s=1$, the minimal number of vertices is $4$, as one copy of the $4-$cycle forms a $1-$node c-ornament, \cite{AAetc}.
The sparsest (in terms of number of vertices) $3-$nodes c-ornament discovered so far has  $20$ vertices  (Figure~\ref{fig:pentasimpleInversion}). 
 For  the case when $s=4$, it seems that the minimal c- ornament has again   $20$ vertices (Figure~\ref{fig:pentaInversionAnother}).  

Another question we have encountered as we were writing this paper is related to the minimal number of edges a connected DDMOG with $n$ vertices must have: {\it What is the sparsest connected DDMOG with $n$ vertices in terms of edges?} When observing the  DDMOG with $14$ vertices in Figure \ref{fig:triangles2onsingletons}, we conjecture that it is the sparsest DDMOG with $14$ vertices. 
As mentioned in Remark~\ref{rem24}, the construction of DDMOGs via ornaments  delivers DDMOGs with fewer edges than the construction offered in Theorem 4.5 in \cite{AAetc}, but we cannot claim that we have found the minimal DDMOG in terms of number of edges for the case $n=14$.    In addition, a similar question for c-ornaments opens up (but no longer requires the assumption of connectedness): {\it What is the sparsest $s-$node ornament with a total of $k$ vertices?}


Finally, this paper builds on results from paper \cite{AAetc} and offers a plethora of examples of  DDMOGs created via weighted sums; even A  the wheel can be created as a weighted sum of a single node and a cycle. Then, it is natural to ask: {\it 
Which DDMOGs cannot be created via a weighted sum of a smaller DDMOG with some cycle(s)?}  
We hope the readers will be inspired to pursue the answers to some of the questions we pose.


\end{document}